%
%
%

 \NeedsTeXFormat{LaTeX2e} 

\documentclass{amsart}
\usepackage{amsmath,amsthm}
\usepackage{amsfonts,amssymb}
\usepackage{amsfonts}
\usepackage[all]{xy}
\usepackage{pdfsync}
\usepackage{hyperref}
\usepackage{graphicx}
\usepackage{subfigure}
\usepackage{tikz}%
\usepackage{color}
\usepackage{booktabs}
 \usepackage{multirow}
\usepackage{caption}
\usepackage{mathrsfs}
\allowdisplaybreaks[4]





\newtheorem{theorem}{Theorem}[section]
\newtheorem{lemma}[theorem]{Lemma}
\newtheorem{corollary}[theorem]{Corollary}
\theoremstyle{definition}
\newtheorem{definition}[theorem]{Definition}
\newtheorem{ex}[theorem]{Example}

\newtheorem{proposition}[theorem]{Proposition}
\theoremstyle{remark}
\newtheorem{remark}[theorem]{Remark}
\numberwithin{equation}{section}

\begin{document}

\title[Dunkl approach to slice regular functions]{Dunkl approach to  slice regular functions}

\dedicatory{Dedicated to the memory of Frank Sommen}
%

\author[G. Binosi]{Giulio Binosi}
\address[1]{Dipartimento di Matematica e Informatica ``U. Dini" \\Universita degli studi di Firenze\\Viale Morgagni 67/A, I-50134 Firenze\\Italy.}
\email{binosi@altamatematica.it}
\thanks{}

\author[H. De Bie]{Hendrik De Bie}
\address[2]{Clifford research group \\Department of Electronics and Information Systems \\Faculty of Engineering and Architecture\\Ghent University\\Krijgslaan 281, 9000 Ghent\\ Belgium.}
\curraddr{}
\email{Hendrik.DeBie@ugent.be}
\thanks{}

\author[P. Lian]{Pan Lian}
\address[3]{School of Mathematical Sciences\\ Tianjin Normal University\\
Binshui West Road 393, 300387 Tianjin\\ P.R. China.
}
\curraddr{}
\email{panlian@tjnu.edu.cn}
\thanks{}


\subjclass[2020]{Primary 30G35; Secondary  33C52, 33C55}
\keywords{Fueter's theorem, Dirac operator, Dunkl operator, monogenic functions, slice regular functions.}
\date{}


\begin{abstract}In this paper, \textcolor{black}{we establish a connection between Dunkl analysis and slice analysis in the setting of Clifford algebras. Specifically,} we show that a Clifford algebra-valued function is slice if, and only if, it belongs to the kernel of the Dunkl-spherical Dirac operator and that a slice function is slice regular if, and only if, it lies in the kernel of the Dunkl-Cauchy-Riemann operator for a suitable parameter. Based on this correspondence and the inverse Dunkl intertwining operator, we propose a new method to construct a family of classical monogenic functions from a given holomorphic function, in the spirit of Fueter’s theorem.
\end{abstract}

\maketitle
\section{Introduction}
In the literature, there are two main approaches to generalize classical holomorphic function theory to higher dimensions. The first is the function theory of several complex variables (see e.g.\,\cite{kra}), while  the second revolves around the function theory of the Dirac operator,  usually referred to as Clifford analysis \cite{dss}. Functions in the kernel of the Dirac operator are called monogenic.  Over the past four decades, many results from complex analysis of one variable have been  generalized within the latter framework. 
Unfortunately, the powers of the variables are not in the kernel of the classical Dirac operator.  Since the 1990s,  Leutwiler, Eriksson, and their collaborators have developed a modification of classical Clifford analysis that incorporates the powers of the variable into the kernel of the modified Dirac operator, see e.g.\,\cite{leu1, leu2}.  Interestingly, their adjustment is closely related to  hyperbolic geometry, see e.g.\,\cite{eo}. Around 2005, another  generalization of classical Clifford analysis was introduced in \cite{ckr} by employing  the influential Dunkl operators \cite{dun}, which has attracted significant attention in recent years. Some relations between these three types of monogenic functions have been discussed in e.g.\,\cite{eo, gos, leu2}. See the right half of Fig. \ref{fig11} for a rough understanding.

In 2006-2007, a new hypercomplex function theory, mainly studying so-called slice regular functions, was introduced  by Gentili and Struppa in \cite{gs}. This theory has attracted considerable interest and experienced  a rapid growth over the last years. In fact,  the theory of  slice regular functions  of one variable nowadays is well-established  and has found significant  applications in quaternionic quantum mechanics and spectral theory of several operators, see e.g.\,\cite{css1}.  Importantly, a convergent power series of the form $\sum_{n=0}^{\infty}x^{n}a_{n}$ defines a slice regular function on an open ball centered at the origin.  There is a vast amount of publications on this topic. Without claiming completeness, we refer to \cite{css22, css, gs, gss}.

Fueter's theorem  (see e.g.\,\cite{css2, esv, fue, qian}) establishes a link between slice regular (or slice monogenic) functions and monogenic functions.  These two theories are also connected through the Radon transform \cite{clss}. Besides those links, the theories of hypermonogenic functions, Dunkl-Clifford analysis and slice regular functions have all been developed  independently to some extent.

In this paper, we establish a link between these theories providing a new characterization of slice and  slice regular functions using the Dunkl-spherical Dirac and the Dunkl-Cauchy-Riemann operators. More precisely, we show that a function defined in an axially symmetric open set is slice if and only if it lies in the kernel of the Dunkl-spherical Dirac operator (Proposition \ref{prop charcaterization sliceness}) and, for slice domains, it is slice regular if it is in the kernel of both the Dunkl-spherical Dirac operator and the Dunkl-Cauchy-Riemann operator (Theorem \ref{sm1}), under an appropriate condition on the multiplicity function.
By this observation, we introduce a new approach in Section \ref{fna} to construct a family of classical monogenic functions from a given slice regular function, based on the inverse Dunkl intertwining operator \cite{dx1}. In contrast,  Fueter's famous theorem only produces a single axial monogenic function at a time. Moreover, our approach  maps each $x^{n}$ homogeneously to monogenic polynomials of the same degree and does not  exhibit significant differences between even and odd dimensions.  The resulting formula bears a resemblance to the well-known Cauchy-Kovalevskaya extension \cite{dss}. We also remark that the structure of the inverse Dunkl intertwining operator is much simpler than the Dunkl intertwining operator, as the latter one is explicitly known only in some special cases, see e.g.\,\cite{dl, dx1}. The connection with hyperbolic harmonic functions provides a local approach to study  slice regular functions, as shown in Theorem \ref{sld1}. Moreover, we find that the functions produced via Fueter's scheme also lie in the kernel of certain Dunkl-Cauchy-Riemann operators. The Laplace operator in this scheme acts as a shift operator on the index $\gamma_{\kappa}$, see Section  \ref{fuet1}. These  relations are summarized in Figure \ref{fig11}.

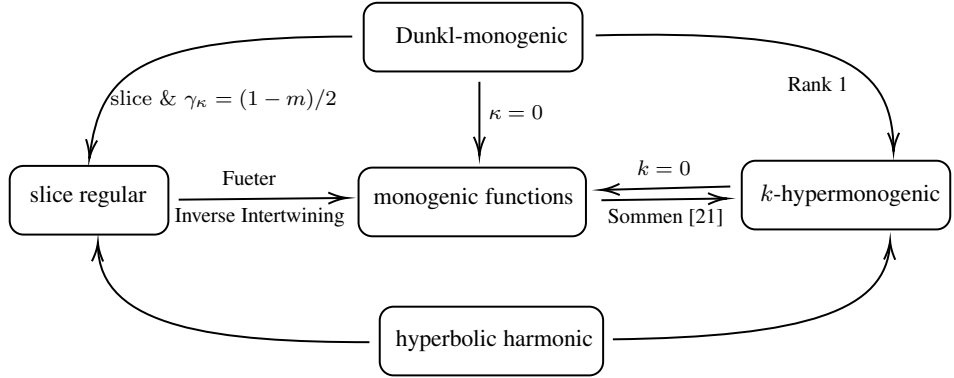
\begin{figure}
\tikzset{every picture/.style={line width=0.75pt}} 

\begin{tikzpicture}[x=0.65pt,y=0.65pt,yscale=-1,xscale=1]

\draw   (224,52) .. controls (224,47.58) and (227.58,44) .. (232,44) -- (343.33,44) .. controls (347.75,44) and (351.33,47.58) .. (351.33,52) -- (351.33,76) .. controls (351.33,80.42) and (347.75,84) .. (343.33,84) -- (232,84) .. controls (227.58,84) and (224,80.42) .. (224,76) -- cycle ;
\draw    (291.33,90) -- (291.33,131) ;
\draw [shift={(291.33,133)}, rotate = 270] [color={rgb, 255:red, 0; green, 0; blue, 0 }  ][line width=0.75]    (10.93,-3.29) .. controls (6.95,-1.4) and (3.31,-0.3) .. (0,0) .. controls (3.31,0.3) and (6.95,1.4) .. (10.93,3.29)   ;
\draw   (221,148) .. controls (221,143.58) and (224.58,140) .. (229,140) -- (345.33,140) .. controls (349.75,140) and (353.33,143.58) .. (353.33,148) -- (353.33,172) .. controls (353.33,176.42) and (349.75,180) .. (345.33,180) -- (229,180) .. controls (224.58,180) and (221,176.42) .. (221,172) -- cycle ;
\draw   (18,147) .. controls (18,142.58) and (21.58,139) .. (26,139) -- (102.33,139) .. controls (106.75,139) and (110.33,142.58) .. (110.33,147) -- (110.33,171) .. controls (110.33,175.42) and (106.75,179) .. (102.33,179) -- (26,179) .. controls (21.58,179) and (18,175.42) .. (18,171) -- cycle ;
\draw    (116.33,158.67) -- (212.33,157.69) ;
\draw [shift={(214.33,157.67)}, rotate = 179.42] [color={rgb, 255:red, 0; green, 0; blue, 0 }  ][line width=0.75]    (10.93,-3.29) .. controls (6.95,-1.4) and (3.31,-0.3) .. (0,0) .. controls (3.31,0.3) and (6.95,1.4) .. (10.93,3.29)   ;
\draw   (444,144) .. controls (444,139.58) and (447.58,136) .. (452,136) -- (557.33,136) .. controls (561.75,136) and (565.33,139.58) .. (565.33,144) -- (565.33,168) .. controls (565.33,172.42) and (561.75,176) .. (557.33,176) -- (452,176) .. controls (447.58,176) and (444,172.42) .. (444,168) -- cycle ;
\draw    (362.33,159) -- (433.33,158.03) ;
\draw [shift={(435.33,158)}, rotate = 179.22] [color={rgb, 255:red, 0; green, 0; blue, 0 }  ][line width=0.75]    (10.93,-3.29) .. controls (6.95,-1.4) and (3.31,-0.3) .. (0,0) .. controls (3.31,0.3) and (6.95,1.4) .. (10.93,3.29)   ;
\draw    (437.33,151) -- (364,152.95) ;
\draw [shift={(362,153)}, rotate = 358.48] [color={rgb, 255:red, 0; green, 0; blue, 0 }  ][line width=0.75]    (10.93,-3.29) .. controls (6.95,-1.4) and (3.31,-0.3) .. (0,0) .. controls (3.31,0.3) and (6.95,1.4) .. (10.93,3.29)   ;
\draw   (233.33,229) .. controls (233.33,224.58) and (236.92,221) .. (241.33,221) -- (355.33,221) .. controls (359.75,221) and (363.33,224.58) .. (363.33,229) -- (363.33,253) .. controls (363.33,257.42) and (359.75,261) .. (355.33,261) -- (241.33,261) .. controls (236.92,261) and (233.33,257.42) .. (233.33,253) -- cycle ;
\draw    (227.33,241.67) .. controls (139.22,241.67) and (69.74,233.83) .. (69.32,185.15) ;
\draw [shift={(69.33,183.67)}, rotate = 91.15] [color={rgb, 255:red, 0; green, 0; blue, 0 }  ][line width=0.75]    (10.93,-3.29) .. controls (6.95,-1.4) and (3.31,-0.3) .. (0,0) .. controls (3.31,0.3) and (6.95,1.4) .. (10.93,3.29)   ;
\draw    (369,240) .. controls (482.6,238.69) and (527.97,223.14) .. (530.26,184.45) ;
\draw [shift={(530.33,182.67)}, rotate = 91.43] [color={rgb, 255:red, 0; green, 0; blue, 0 }  ][line width=0.75]    (10.93,-3.29) .. controls (6.95,-1.4) and (3.31,-0.3) .. (0,0) .. controls (3.31,0.3) and (6.95,1.4) .. (10.93,3.29)   ;
\draw    (217.33,63.67) .. controls (91.6,63.67) and (70.74,85.23) .. (64.52,133.2) ;
\draw [shift={(64.33,134.67)}, rotate = 276.98] [color={rgb, 255:red, 0; green, 0; blue, 0 }  ][line width=0.75]    (10.93,-3.29) .. controls (6.95,-1.4) and (3.31,-0.3) .. (0,0) .. controls (3.31,0.3) and (6.95,1.4) .. (10.93,3.29)   ;
\draw    (359,63) .. controls (494.96,59.7) and (531.6,76.33) .. (532.32,129.06) ;
\draw [shift={(532.33,130.67)}, rotate = 270] [color={rgb, 255:red, 0; green, 0; blue, 0 }  ][line width=0.75]    (10.93,-3.29) .. controls (6.95,-1.4) and (3.31,-0.3) .. (0,0) .. controls (3.31,0.3) and (6.95,1.4) .. (10.93,3.29)   ;

\draw (241,56) node [anchor=north west][inner sep=0.75pt]   [align=left] {{\fontfamily{ptm}\selectfont {\small Dunkl-monogenic}}};
\draw (228,150) node [anchor=north west][inner sep=0.75pt]   [align=left] {{\fontfamily{ptm}\selectfont {\small monogenic functions}}};
\draw (28,149) node [anchor=north west][inner sep=0.75pt]   [align=left] {{\fontfamily{ptm}\selectfont {\small slice regular  \ \ }}};
\draw (140,140) node [anchor=north west][inner sep=0.75pt]   [align=left] {{\footnotesize {\fontfamily{ptm}\selectfont Fueter}}};
\draw (453,148) node [anchor=north west][inner sep=0.75pt]   [align=left] {{\small $\displaystyle k${\fontfamily{ptm}\selectfont -hypermonogenic}}};
\draw (364.33,162) node [anchor=north west][inner sep=0.75pt]   [align=left] {{\fontfamily{ptm}\selectfont {\footnotesize Sommen \cite{gos}}}};
\draw (75,92.4) node [anchor=north west][inner sep=0.75pt]  [font=\footnotesize]  {slice \& ${\textstyle \gamma_{\kappa} =(1-m)/2}$};
\draw (295,101.9) node [anchor=north west][inner sep=0.75pt]  [font=\footnotesize]  {${\textstyle \kappa =0}$};
\draw (381.33,133.9) node [anchor=north west][inner sep=0.75pt]  [font=\footnotesize]  {${\textstyle k=0}$};
\draw (469,85) node [anchor=north west][inner sep=0.75pt]   [align=left] {{\footnotesize {\fontfamily{ptm}\selectfont Rank 1}}};
\draw (112.33,161.67) node [anchor=north west][inner sep=0.75pt]   [align=left] {{\footnotesize {\fontfamily{ptm}\selectfont Inverse Intertwining}} };
\draw (241,233) node [anchor=north west][inner sep=0.75pt]   [align=left] {{\small {\fontfamily{ptm}\selectfont hyperbolic harmonic}}};
\end{tikzpicture}
 \centering
        \caption{Dunkl-monogenic, slice regular and hypermonogenic functions on $\Omega_D$ centered at the origin }
        \label{fig11}
\end{figure}
The rest of this paper is structured  as follows. In Section \ref{pre}, we recall some basic notions of Dunkl-Clifford analysis and slice regular functions. Section \ref{dsu1} is dedicated  to the characterization of slice regular functions through Dunkl-Cauchy-Riemann operator. In Section \ref{fuet1}, we revisit the classical Fueter's theorem and highlight its correlation with Dunkl-Clifford analysis. In Section \ref{fna}, we present our new approach for constructing monogenic functions from a given holomorphic function. Some examples are given as well.


\section{Preliminaries}\label{pre}
\subsection{Dunkl operators}
Suppose $\alpha$ is a non-zero vector in $\mathbb{R}^{m+1}$. The reflection in the hyperplane orthogonal to $\alpha$ is defined by
\begin{equation*}
    \sigma_{\alpha} x:=x-2\frac{\langle \alpha, x\rangle}{|\alpha|^{2}}\alpha, \qquad  \mbox{for} \quad  x\in \mathbb{R}^{m+1},
\end{equation*}
where $\langle \alpha, x\rangle:=\sum_{j=0}^{m}\alpha_{j}x_{j}$ is the standard Euclidean inner product and $|x|:=\sqrt{\langle x, x\rangle}$.
A finite set $\mathscr{R}\subset \mathbb{R}^{m+1}\backslash\{0\}$ is called a root system if $\mathscr{R}\cap \mathbb{R}\cdot \alpha=\{\alpha, -\alpha\}$
and $\sigma_{\alpha} \mathscr{R}=\mathscr{R}$ for all $\alpha\in \mathscr{R}$. \textcolor{black}{ The dimension of ${\rm span}_{\mathbb{R}}\mathscr{R}$ is called the rank of $\mathscr{R}$.} For a given root system $\mathscr{R}$, the reflections $\sigma_{\alpha}$, $\alpha\in \mathscr{R}$, generate a finite group $W\subset {\rm O}(m+1)$, called the Coxeter-group associated with $\mathscr{R}$. It is known that each root system can be expressed  as a disjoint union, i.e. $\mathscr{R}=\mathscr{R}_{+}\cup (-\mathscr{R}_{+})$, where $\mathscr{R}_{+}$ and $-\mathscr{R}_{+}$ are separated by a hyperplane through the origin. We fix the positive subsystem $\mathscr{R}_{+}$. \textcolor{black}{From now on, we assume that $\mathscr{R}$ is normalized, meaning that $\langle \alpha, \alpha\rangle=2$ for every $\alpha\in \mathscr{R}$.} A function $\kappa: \mathscr{R}\rightarrow \mathbb{R}$ on a root system $\mathscr{R}$ is called a multiplicity function if it is invariant under the action of the associated reflection group $W$. For later use, we introduce the index $\gamma_{\kappa}:=\sum_{\alpha\in \mathscr{R}_{+}}\kappa(\alpha).$ \textcolor{black}{ The number $(m+1)+2\gamma_{\kappa}$ is  the generalized homogeneous dimension, see \cite{dx1}.}
\begin{definition} \cite{dun}
Let $\mathscr{R}$ be a root system over $\mathbb{R}^{m+1}$. For each fixed positive subsystem $\mathscr{R}_{+}$ and multiplicity function $\kappa$, the  Dunkl operator is defined by
\begin{equation*}
    T_{i}f(x)=\frac{\partial}{\partial x_{i}} f(x)+\sum_{\alpha\in \mathscr{R}_{+}}\kappa(\alpha)\frac{f(x)-f(\sigma_{\alpha }x)}{\langle \alpha, x\rangle}\alpha_{i}, \qquad i=0,1,\ldots, m,
\end{equation*}
for $f\in \mathcal{C}^{1}(\mathbb{R}^{m+1})$.
\end{definition}
\begin{remark} The Dunkl operators $\{T_{j}\}_{j=0}^{m}$ reduce to the classical partial derivatives when $\kappa\equiv 0$.  An important property of Dunkl operators is that for fixed $\kappa$, they commute mutually, i.e., $T_{i}T_{j}=T_{j}T_{i}$.
\end{remark}

 The classical Fischer product can be defined in the Dunkl setting. Denote $\mathcal{P}$ for the space of polynomial functions on $\mathbb{R}^{m+1}$.
\begin{definition} \cite{ros} For any $p, q\in \mathcal{P}$, we define
\begin{equation} \label{bil2}
    [p, q]_{\kappa}:=(p(T)q)(0).
\end{equation}
  The set of all values $\kappa$ such that the bilinear form  \eqref{bil2} is degenerate is called  the singular set of $\kappa$, denoted by $K^{\rm sing}$.
\end{definition}

 A complete description of $K^{\rm sing}$ was given in \cite{ddo}.
 \begin{ex}
     For the Dunkl operator on the line, the singular set is given by
 \begin{equation*}
     K^{\rm sing}=\{-1/2-n, n\in \mathbb{Z}_{+}\}.
 \end{equation*}
 \end{ex}
 For more details on Dunkl theory, we refer to  \cite{dx1, ros}.  In this work, we are mainly interested in the whole sum $\gamma_{\kappa}=\sum_{\alpha\in \mathscr{R}_{+}}\kappa(\alpha)$, rather than the  concrete  value of the multiplicity function $\kappa$. Therefore,   in the subsequent sections where we  discuss the index $\gamma_{\kappa}$, the singular set of $\kappa$ will   be excluded for simplicity.
\subsection{Dunkl-Clifford analysis}
We denote by $\mathbb{R}_{0,m}$ the real universal Clifford algebra  generated by the multiplication rules
\begin{equation*}
    		\begin{split}
    			\left\{
    			\begin{array}{ll}
    				e_{j}\,e_{k}=-e_{k}\,e_{j}, & 1\le j\neq k\le m,\\
    				e_{j}^{2}=-1, & j=1,\ldots, m,\\
    			\end{array}
    			\right.
    		\end{split}
    	\end{equation*}
where $(e_{1}, e_{2}, \ldots, e_{m})$ is an orthonormal basis of the Euclidean space $\mathbb{R}^{m}$. In the following, any vector  $ (x_{1}, x_{2}, \ldots, x_{m})\in \mathbb{R}^{m}$ will be identified with
$\underline{x}=\sum_{j=1}^{m}x_{j}e_{j}$. \textcolor{black}{The Clifford product of two vectors splits into a scalar part  and a bivector as follows,
\begin{equation} \label{iw1}
    \underline{x} \,\underline{y}=-\langle \underline{x}, \underline{y}\rangle+\underline{x}\wedge \underline{y},
\end{equation}
where \begin{equation*}
    \langle \underline{x}, \underline{y}\rangle=\sum_{j=1}^{m}x_{j}y_{j} \quad
{\rm and} \quad
    \underline{x}\wedge \underline{y}=\sum_{j=1}^{m}\sum_{k=j+1}^{m}e_{j}e_{k}(x_{j}y_{k}-x_{k}y_{j}).
\end{equation*}}
The space of  paravectors  is defined  by
\begin{equation*}
    \{x=x_{0}+e_{1}x_{1}+\cdots+e_{m}x_{m}|x_{0}, \ldots, x_{m}\in \mathbb{R}\}\simeq \mathbb{R}^{m+1}.
\end{equation*}
 A basis for $\mathbb{R}_{0,m}$ is given by the elements $e_{A}=e_{i_{1}}\ldots e_{i_{j}}$, where $A=\{i_{1}, \ldots, i_{j}\}\subset \{1, \ldots, m\}$ satisfying $1\le i_{1}<\cdots<i_{j}\le m$. Thus an $\mathbb{R}_{0,m}$-valued function $f$ over $\Omega \subset \mathbb{R}^{m+1} $ can be written as
$
    f=\sum_{A}e_{A}f_{A},
$
where each component $f_{A}: \Omega\rightarrow \mathbb{R}$ is a real valued function. If each component $f_{A}\in \mathcal{C}^{1}(\Omega)$, we say that  $f$ is of class  $\mathcal{C}^{1}(\Omega)$.  We also need the conjugate (the anti-involution) on $\mathbb{R}_{0,m}$, defined by $e_{0}^c=e_{0}$, $e_{j}^c=-e_{j}$ and $(uv)^c=v^c\,u^c$ for any $u, v\in \mathbb{R}_{0,m}$.

In this  paper, we will always assume that the finite reflection group $W$ associated to the root system $\mathscr{R}$ leaves the $x_{0}$-axis invariant and assume that $T_{0}=\partial_{x_{0}}$. Namely, we demand that the root system $\mathscr{R}\subset \mathbb{R}^{m}$.
\begin{definition}
  The Dunkl-Dirac operator on $\mathbb{R}^{m}$ associated to the root system $\mathscr{R}$ is defined  by
\begin{equation*}
    \underline{D}_{\kappa}:=\sum_{j=1}^{m} e_{j}T_{j}.
\end{equation*}
Furthermore, the Dunkl-Cauchy-Riemann operator on $\mathbb{R}^{m+1}$  is given by
\begin{equation*}
    D_{\kappa}:=\partial_{x_0}+\underline{D}_{\kappa}.
\end{equation*}
Functions in the kernel of $D_\kappa$ or $\underline D_\kappa$ will be called Dunkl monogenic. \end{definition}
\begin{remark} When $\kappa\equiv 0$,   $\underline{D}_{\kappa}$  reduces to the Euclidean  Dirac operator on $\mathbb{R}^{m}$, i.e.
\begin{equation*}
  \underline{D}=\sum_{j=1}^{m} e_{j}\partial_{x_j},
\end{equation*}
while $D_{\kappa}$ reduces to the generalized Cauchy-Riemann operator $\overline{\partial}_{x}:=\partial_{x_0}+\underline{D}.$
\end{remark}

We consider the spherical part of the Dunkl-Dirac operator.
\begin{lemma} \label{fc1}\cite{fck} The Dunkl-Dirac operator on $\mathbb{R}^{m}$ in spherical coordinates is given by
\begin{equation*}
    \underline{D}_{\kappa}=\underline{\omega}\left(\partial_{r}+\frac{1}{r}\widetilde{\Gamma}_{\underline{\omega}}\right),
\end{equation*}
where $\underline{\omega}=\underline{x}/|\underline{x}|$, $r=|\underline{x}|$ and
\begin{equation}\label{ga1}
    \widetilde{\Gamma}_{\underline{\omega}}:=\gamma_{\kappa}+\Gamma_{\underline{\omega}}+\Psi
\end{equation}
is the Dunkl-spherical Dirac operator, in which 
\begin{equation*}
    \Gamma_{\underline{\omega}}=-\sum_{i<j}e_{i}e_{j}(x_{i}\partial_{x_{j}}-x_{j}\partial_{x_{i}})
\end{equation*} is the  spherical Dirac operator and
\begin{equation}\label{p21}
\begin{split}
     \Psi f(\underline{x})&:=-\sum_{i<j}e_{i}e_{j}\sum_{\underline{\alpha}\in \mathscr{R}_{+}}\kappa(\underline{\alpha})\frac{f(\underline{x})-f(\sigma_{\underline{\alpha}}\underline{x})}{\langle \underline{\alpha}, \underline{x}\rangle}(x_{i}\alpha_{j}-x_{j}\alpha_{i})\\
     &\quad -\sum_{\underline{\alpha}\in \mathscr{R}_{+}}\kappa(\underline{\alpha})f(\sigma_{\underline{\alpha}} \underline{x})
\end{split}
\end{equation}
for any $f\in \mathcal{C}^{1}(\mathbb{R}^{m})$.
\end{lemma}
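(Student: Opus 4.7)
The plan is to split the Dunkl–Dirac operator into its classical differential part $\underline{D}=\sum_j e_j\partial_{x_j}$ and its difference part $\underline{D}_\kappa f(\underline{x}) := \sum_j e_j \sum_{\underline{\alpha}\in R_+}\kappa(\underline{\alpha})\frac{f(\underline{x})-f(\sigma_{\underline{\alpha}}\underline{x})}{\langle\underline{\alpha},\underline{x}\rangle}\alpha_j$, and recast each in spherical coordinates separately. For the classical part, I would use the standard Clifford identity $\underline{x}\,\underline{D} = -\mathbb{E} - \Gamma_{\underline{\omega}}$, where $\mathbb{E}=\sum_j x_j\partial_{x_j}=r\partial_r$ is the Euler operator; this identity is verified by computing $\underline{x}\,\underline{D} = \sum_{i,j} x_i e_i e_j \partial_{x_j}$, separating the diagonal $i=j$ piece (which gives $-\mathbb{E}$) from the off-diagonal piece (which, after pairing $i<j$ against $j<i$, collapses to $-\Gamma_{\underline{\omega}}$ with the sign convention of the paper). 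Multiplying on the left by $\underline{x}^{-1}=-\underline{\omega}/r$ yields the desired $\underline{D}=\underline{\omega}(\partial_r + \frac{1}{r}\Gamma_{\underline{\omega}})$.

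For the difference part, I would rewrite $\sum_j e_j \alpha_j = \underline{\alpha}$ and factor out $\underline{\omega}$ using the Clifford product identity
\begin{equation*}
\underline{\alpha} = -\underline{\omega}(\underline{\omega}\,\underline{\alpha}) = \underline{\omega}\,\langle\underline{\omega},\underline{\alpha}\rangle \;-\; \underline{\omega}\sum_{i<j} e_i e_j\,(\omega_i\alpha_j-\omega_j\alpha_i),
\end{equation*}
which follows from $\underline{\omega}\,\underline{\alpha} = -\langle\underline{\omega},\underline{\alpha}\rangle + \sum_{i<j}(\omega_i\alpha_j-\omega_j\alpha_i)e_i e_j$ and $\underline{\omega}^2=-1$. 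Substituting $\underline{x}=r\underline{\omega}$ gives $\langle\underline{\omega},\underline{\alpha}\rangle/\langle\underline{\alpha},\underline{x}\rangle = 1/r$ and $(\omega_i\alpha_j-\omega_j\alpha_i)/\langle\underline{\alpha},\underline{x}\rangle = (x_i\alpha_j-x_j\alpha_i)/(r\langle\underline{\alpha},\underline{x}\rangle)$, so the difference part becomes
\begin{equation*}
\underline{D}_\kappa f = \frac{\underline{\omega}}{r}\Bigg[\sum_{\underline{\alpha}\in R_+}\kappa(\underline{\alpha})\bigl(f(\underline{x})-f(\sigma_{\underline{\alpha}}\underline{x})\bigr) - \sum_{i<j}e_i e_j\sum_{\underline{\alpha}\in R_+}\kappa(\underline{\alpha})\frac{f(\underline{x})-f(\sigma_{\underline{\alpha}}\underline{x})}{\langle\underline{\alpha},\underline{x}\rangle}(x_i\alpha_j-x_j\alpha_i)\Bigg].
\end{equation*}
Recognising $\sum_{\underline{\alpha}\in R_+}\kappa(\underline{\alpha})f(\underline{x})=\gamma_\kappa f(\underline{x})$ and reorganising, the bracketed expression is exactly $(\gamma_\kappa+\Psi)f$ with $\Psi$ as in \eqref{p21}.

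Adding the two pieces yields $\underline{D}_h = \underline{\omega}\bigl(\partial_r + \tfrac{1}{r}(\Gamma_{\underline{\omega}}+\gamma_\kappa+\Psi)\bigr) = \underline{\omega}\bigl(\partial_r + \tfrac{1}{r}\widetilde{\Gamma}_{\underline{\omega}}\bigr)$, as claimed. I expect the main technical obstacle to be the sign bookkeeping in the second step: one must be careful that the Clifford identity used to extract $\underline{\omega}$ from $\underline{\alpha}$ produces the wedge term with exactly the sign needed to match the operator $\Psi$ as written in \eqref{p21}, and that the combination with $\Gamma_{\underline{\omega}}$ (whose sign convention in this paper already carries a minus) is consistent. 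Everything else is straightforward rewriting in spherical coordinates, using only $\underline{x}=r\underline{\omega}$, $\underline{\omega}^2=-1$, and the definitions of $R_+$ and $\kappa$.
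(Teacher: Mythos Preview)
Your argument is correct. The paper itself does not prove this lemma; it is quoted from \cite{fck} without proof, so there is no in-paper proof to compare against. Your splitting of $\underline{D}_h$ into the classical Dirac part and the difference part, followed by the Clifford identities $\underline{x}\,\underline{D}=-\mathbb{E}-\Gamma_{\underline{\omega}}$ and $\underline{\alpha}=-\underline{\omega}(\underline{\omega}\,\underline{\alpha})$, is the natural computation and the signs all check out exactly as you anticipated. In fact, the paper later (Proposition~\ref{proposition expression Gamma}) uses precisely the same Clifford identity $\underline{x}\,\underline{\alpha}=-\langle\underline{x},\underline{\alpha}\rangle+\underline{x}\wedge\underline{\alpha}$ to rewrite $\gamma_\kappa+\Psi$ in the compact form $-\underline{x}\sum_{\underline{\alpha}\in R_+}\kappa(\underline{\alpha})\underline{\alpha}\,\rho_{\underline{\alpha}}$, which is essentially your difference-part computation run in reverse; so your approach is entirely in the spirit of the surrounding material.
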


In the Dunkl case, we have the following generalization of a property of the ordinary spherical Dirac operator.
\begin{lemma} \cite{fck} Suppose $M_{k}(\underline{x})$ is a homogeneous Dunkl monogenic polynomial of degree $k$ on $\mathbb{R}^{m}$, i.e., $M_{k}\in\ker\underline D_\kappa$ and $M_{k}(\underline x)=r^kM_k(\underline\omega)$. Then it holds
\begin{equation}\label{pk1}
\begin{split}
    			\left\{
    			\begin{array}{ll}
    				 \widetilde{\Gamma}_{\underline\omega}(M_j)=-jM_j,\\
    				\widetilde{\Gamma}_{\underline\omega}(\underline xM_j)=(2\gamma_{\kappa}+j+m-1)\underline xM_j.\\
    			\end{array}
    			\right.
    		\end{split}
\end{equation}
Furthermore, for any radial function $f(\underline{x})=f(|\underline{x}|)=f(r)$ and $\underline{\omega}=\underline{x}/|\underline{x}|,$ we have
\begin{equation}\label{pk12}
\begin{split}
    			\left\{
    			\begin{array}{ll}
    				 \widetilde{\Gamma}_{\underline\omega}f(r)=0,\\
    				\widetilde{\Gamma}_{\underline\omega}(\underline{\omega}f(r))=\widetilde{\Gamma}_{\underline\omega}(\underline{\omega})f(r),\\
          \widetilde{\Gamma}_{\underline\omega}(\underline{\omega})=(2\gamma_{\kappa}+m-1)\underline{\omega}.
    			\end{array}
    			\right.
    		\end{split}
\end{equation}
\end{lemma}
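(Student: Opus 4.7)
The plan is to invert the radial decomposition of $\underline{D}_h$ supplied by Lemma \ref{fc1}. Writing $\underline{\omega}^{-1}=-\underline{\omega}$ and left-multiplying $\underline{D}_h=\underline{\omega}(\partial_r+r^{-1}\widetilde{\Gamma}_{\underline{\omega}})$ by $-\underline{\omega}$ yields
\begin{equation*}
\widetilde{\Gamma}_{\underline{\omega}}=-r\underline{\omega}\,\underline{D}_h-r\partial_r,
\end{equation*}
which then reduces every identity of the lemma to a short calculation.

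For \eqref{pk1}, I first apply this expression to $M_j$. By hypothesis $\underline{D}_h M_j=0$, and Euler's relation on the homogeneous polynomial $M_j$ gives $r\partial_r M_j=jM_j$, so the first identity is immediate. The second identity requires one more input, namely the Dunkl--Clifford anticommutation relation
\begin{equation*}
\underline{D}_h\,\underline{x}+\underline{x}\,\underline{D}_h=-(2\mathbb{E}+2\gamma_\kappa+m),
\end{equation*}
where $\mathbb{E}=\sum_i x_i\partial_{x_i}$ is the Euler operator. Applying both sides to $M_j$ and using $\underline{D}_h M_j=0$, $\mathbb{E}M_j=jM_j$ yields $\underline{D}_h(\underline{x}M_j)=-(2\gamma_\kappa+m+2j)M_j$. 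Substituting this, together with $r\partial_r(\underline{x}M_j)=(j+1)\underline{x}M_j$ and $r\underline{\omega}=\underline{x}$, into the displayed expression for $\widetilde{\Gamma}_{\underline{\omega}}$ and collecting coefficients produces the claimed eigenvalue $2\gamma_\kappa+j+m-1$ after a single cancellation.

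For \eqref{pk12}, I return to the explicit formula \eqref{ga1}--\eqref{p21}. A radial function $f(r)$ is $W$-invariant, so the difference $f(\underline{x})-f(\sigma_{\underline{\alpha}}\underline{x})$ in $\Psi$ vanishes and the remaining term $-\sum_{\underline{\alpha}\in R_+}\kappa(\underline{\alpha})f(\sigma_{\underline{\alpha}}\underline{x})=-\gamma_\kappa f$ cancels the isolated $\gamma_\kappa$ in \eqref{ga1}; since the classical angular Dirac operator $\Gamma_{\underline{\omega}}$ also annihilates radial functions, the first radial identity follows. For the second, I use the Dunkl product rule $T_i(g\,f(r))=T_i(g)f(r)+g\,f'(r)x_i/r$ (valid because $f(r)$ is $W$-invariant), which leads to $\underline{D}_h(\underline{\omega} f(r))=\underline{D}_h(\underline{\omega})f(r)-f'(r)$; inserting this into the displayed expression for $\widetilde{\Gamma}_{\underline{\omega}}$, the two $f'(r)$-terms cancel and the factor $f(r)$ pulls out, giving $\widetilde{\Gamma}_{\underline{\omega}}(\underline{\omega} f(r))=\widetilde{\Gamma}_{\underline{\omega}}(\underline{\omega})f(r)$. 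The third radial identity is the $j=0$, $M_0\equiv 1$ specialization of \eqref{pk1}, namely $\widetilde{\Gamma}_{\underline{\omega}}(\underline{x})=(2\gamma_\kappa+m-1)\underline{x}$, with the scalar factor $r$ stripped off via the second radial identity applied to $f(r)=r$.

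The main obstacle is the anticommutation relation $\{\underline{D}_h,\underline{x}\}=-(2\mathbb{E}+2\gamma_\kappa+m)$, which is precisely where the $\gamma_\kappa$-shift distinguishing the Dunkl case from the classical one enters. It is standard in Dunkl--Clifford analysis and follows by a short direct computation using $T_i(x_j)=\delta_{ij}+2\sum_{\alpha\in R_+}\kappa(\alpha)\alpha_i\alpha_j/|\alpha|^2$ together with the Clifford relations $e_i e_j+e_j e_i=-2\delta_{ij}$; once this ingredient is in place, the rest of the argument is essentially bookkeeping.
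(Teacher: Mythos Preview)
The paper does not prove this lemma; it is quoted verbatim from \cite{fck} without argument. Your proof is correct and self-contained. The key move---inverting Lemma \ref{fc1} to write $\widetilde{\Gamma}_{\underline{\omega}}=-r\underline{\omega}\,\underline{D}_h-r\partial_r$---reduces both eigenvalue identities in \eqref{pk1} to the Euler relation together with the standard $\mathfrak{osp}(1|2)$ anticommutator $\{\underline{D}_h,\underline{x}\}=-(2\mathbb{E}+2\gamma_\kappa+m)$, and your derivation of the second eigenvalue via $\underline{D}_h(\underline{x}M_j)=-(2\gamma_\kappa+m+2j)M_j$ is exactly right. The treatment of the radial identities is also sound: the $W$-invariance of $f(r)$ kills the difference-quotient terms and collapses the reflection sum to $-\gamma_\kappa f$, and your Leibniz computation for $\underline{D}_h(\underline{\omega}f(r))$ correctly produces the cancellation of the $f'(r)$-terms. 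Reading off $\widetilde{\Gamma}_{\underline{\omega}}(\underline{\omega})$ as the $j=0$ instance of \eqref{pk1} followed by stripping the radial factor is a clean way to close the loop.
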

~\\

\subsection{Slice regular functions}
Let $D\subset\mathbb{C}$ be an open subset symmetric with respect to the real axis. A function $F\colon D\to \mathbb{R}_{0, m}\otimes\mathbb{C}$ is called stem function if it is complex intrinsic, i.e., it satisfies
\begin{equation*}
    F(\overline{z})=\overline{F(z)},\qquad\forall \, z\in D,
\end{equation*}
where $\overline{z}$ is the complex conjugation of $z$, and $\overline{(x\otimes z)}:= x\otimes \overline{z}$, for any $x\in\mathbb{R}_{0, m}$, $z\in\mathbb{C}$. Note that, if $F=\alpha+i\beta$, it is equivalent to requiring $\alpha(\overline{z})=\alpha(z)$ and $\beta(\overline{z})=-\beta(z)$.
Let $\mathbb{S}^{m-1}$ be the $(m-1)$-dimensional unit sphere in $\mathbb{R}^{m}$. Then for any  vector $ \underline{\omega} \in \mathbb{S}^{m-1}$, it is easy to verify that $\underline{\omega}^{2}=-1$. Given $\underline\omega\in\mathbb{S}^{m-1}$, let $\mathbb{C}_{\underline\omega}={\rm span}\{1,\underline\omega\}$ be the complex slice of $\mathbb{R}^{m+1}$ generated by $\underline\omega$ and define the $^\ast$-algebra isomorphism $\phi_{\underline\omega}:\mathbb{C}\ni a+ib\mapsto a+\underline\omega b\in\mathbb{C}_{\underline\omega}$. We can define the circularization of a symmetric set $D$ as
\begin{equation*}
    \Omega_D:=\bigcup_{\underline\omega\in\mathbb{S}^{m-1}}\phi_{\underline\omega}(D)=\{a+\underline\omega b: a+ib\in D, \underline\omega\in\mathbb{S}^{m-1}\}\subset\mathbb{R}^{m+1}.
\end{equation*}
Sets of the form $\Omega_D$ are called axially symmetric and slice domains whenever $D\cap\mathbb{R}\neq\emptyset$.
Given a stem function $F=\alpha+i \beta$, we define the induced slice function $f:=\mathcal{I}(F):\Omega_D\to\mathbb{R}_m$ for any $x=x_0+\underline\omega|\underline x|\in\Omega_D$ as
\begin{equation}\label{sf1}
    f(x):=\alpha(x_0,|\underline x|)+\underline{\omega} \, \beta(x_0,|\underline x|).
\end{equation}
We will say that $f$ is slice regular if it is induced by a stem function which is holomorphic with respect to the complex structure on $\mathbb{R}_{0, m}\otimes\mathbb{C}$ defined by left multiplication by $i$. Equivalently, a slice function $f$ is slice regular on $\Omega_D$ if and only if for  any unit vector $\underline{\omega}\in \mathbb{S}^{m-1}$,  the restriction $f_{\underline{\omega}}$  of the function $f$ to the complex plane $\mathbb{C}_{\underline{\omega}}$ is holomorphic, i.e.
\begin{equation*}
    \overline{\partial}_{\underline{\omega}}f(x_{0}+\underline{\omega}r):=\frac{1}{2}\left(\frac{\partial}{\partial x_{0}}+\underline{\omega}\frac{\partial}{\partial r}\right)f_{\underline{\omega}}(x_{0}+\underline{\omega}r)=0,
\end{equation*}
on $\Omega_{D} \cap \mathbb{C}_{\underline{\omega}}$. We denote with $\mathcal{S}(\Omega_D)$ and $\mathcal{S}\mathcal{R}(\Omega_D)$ respectively the set of slice and slice regular functions on $\Omega_D$.


Furthermore, we introduce two notions related with slice functions, called the spherical value $f_{s}^{\circ}(x)$
and the  spherical derivative $f_{s}'(x)$ of $f$. They are defined as
\begin{equation} \label{qe1}
    \begin{split}
    			\left\{
    			\begin{array}{ll}
    		 f_{s}^{\circ}(x):=(f(x)+f(x^c))/2,\\
    				 f_{s}'(x):={\rm Im}(x)^{-1}(f(x)-f( x^c))/2,\\
    			\end{array}
    			\right.
    		\end{split}
\end{equation}
Note that $f^\circ_s$ and $f'_s$ are slice functions as well, induced respectively by $\alpha(z)$ and $\beta(z)/\operatorname{Im}(z)$. They are constant on every sphere $\mathbb{S}_x=\{x_0+\underline\omega|\underline x|:\underline\omega\in\mathbb{S}^{m-1}\}$, so they depend only on $\operatorname{Re}(x)$ and $|\operatorname{Im}(x)|$ and they decompose $f$ through the representation formula
\begin{equation*}
    f(x)=f^\circ_s(x)+\operatorname{Im}(x)f'_s(x).
\end{equation*}


In \cite{cgs}, Colombo, Gonz\'alez-Cervantes and Sabadini introduced a global non-constant coefficients differential operator to study slice regular functions. Here, we consider a slightly different operator,
  \begin{equation*}
     \overline{\theta}:=\partial_{x_0}+\frac{\underline x}{|\underline x|^2}\sum_{j=1}^m x_j\partial_{x_j},
 \end{equation*}
 defined on the class $\mathcal{C}^{1}(\Omega_{D}\backslash\mathbb{R})$ of $\mathbb{R}_{0,m}$-valued functions. The operator $\overline{\theta}$ is obtained by dividing the operator in \cite{cgs}  by $|\underline x|^2$.
 It was found that  slice regularity can be characterized as follows:
\begin{lemma} \cite{gp, per}\label{slice1} 
If $f\in \mathcal{C}^{1}(\Omega_{D})$ is a slice function, then $f$ is slice-regular if and only if $\overline{\theta}f=0$
on $\Omega_{D}\backslash\mathbb{R}$. If $\Omega_{D}\cap \mathbb{R}\neq \emptyset$ and $f\in \mathcal{C}^{1}(\Omega_{D})$ (not a priori a slice function), then $f$ is slice-regular if and only if $\overline{\theta}f=0.$

\end{lemma}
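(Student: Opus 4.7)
My plan is to split the two statements in the lemma and handle them in turn. For the first part, concerning an a priori slice function $f$, I will directly evaluate $\overline{\theta} f$ and show it vanishes exactly when the Cauchy--Riemann equations for the inducing stem function hold. For the second part, I will restrict $f$ slicewise, recognize $\overline{\theta}$ as a genuine complex Cauchy--Riemann operator on each slice, and use $\Omega_D\cap\mathbb{R}\neq\emptyset$ together with the identity principle to force $f$ to be slice, so that the first part closes the argument.

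For Step 1, I will write $f = \alpha(x_0,r) + \underline{\omega}\,\beta(x_0,r)$ with $r = |\underline{x}|$ and $\underline{\omega} = \underline{x}/r$. The two observations that make the computation clean are that the Euler field $E := \sum_{j=1}^m x_j\partial_{x_j}$ annihilates $\underline{\omega}$ (since $\underline{\omega}$ is $0$-homogeneous in $\underline{x}$) while $Er = r$, and that $\underline{x}\,\underline{\omega} = -r$. Together these should give $\frac{\underline{x}}{|\underline{x}|^2} E f = \underline{\omega}\,\partial_r\alpha - \partial_r\beta$, so that
\[
\overline{\theta} f = (\partial_{x_0}\alpha - \partial_r\beta) + \underline{\omega}\,(\partial_{x_0}\beta + \partial_r\alpha).
\]
Vanishing of both the scalar and the $\underline{\omega}$-part is precisely the Cauchy--Riemann system for the stem function $F = \alpha + i\beta$, which by the holomorphy definition of slice regularity is equivalent to $f\in\mathcal{S}\mathcal{R}(\Omega_D)$.

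For Step 2, I will fix $I\in\mathbb{S}^{m-1}$ and consider the slice restriction $\tilde f(x_0,t) := f(x_0+It)$. A direct check shows $Ef\big|_{\underline{x} = It} = t\,\partial_t \tilde f$, and pulling $\underline{x}/|\underline{x}|^2 = I/t$ through yields $\overline{\theta} f\big|_{\mathbb{C}_I} = \partial_{x_0}\tilde f + I\partial_t\tilde f = 2\,\overline{\partial_I}\tilde f$. Hence $\overline{\theta} f\equiv 0$ off $\mathbb{R}$ is equivalent to every restriction $f|_{\mathbb{C}_I}$ being holomorphic on $\mathbb{C}_I\cap\Omega_D$. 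Under $\Omega_D\cap\mathbb{R}\neq\emptyset$ all such slices share the real trace $D\cap\mathbb{R}$, where $f$ takes common $\mathbb{R}_{0,m}$-valued data; the one-variable identity principle then forces each $f|_{\mathbb{C}_I}$ to be the unique holomorphic extension of this common trace, which reassembles into a genuine stem function $F = \alpha + i\beta$. This shows $f$ is slice, and Step 1 concludes.

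I expect the main obstacle to be Step 2: the Cauchy--Riemann reduction itself is routine, but extracting a bona fide stem function from the slicewise holomorphy requires checking that the independent holomorphic extensions glue coherently across all $I\in\mathbb{S}^{m-1}$ into a single pair $(\alpha(x_0,r),\beta(x_0,r))$. The hypothesis $\Omega_D\cap\mathbb{R}\neq\emptyset$ does the heavy lifting, since without it the slices are disjoint and no such rigidity is available. A secondary technical nuisance is the formal singularity of $\overline{\theta}$ along $\mathbb{R}$, but $\mathcal{C}^1$-regularity of $f$ together with vanishing of $\overline{\theta} f$ off $\mathbb{R}$ should be enough to extend the slice-function identity across the real axis by continuity.
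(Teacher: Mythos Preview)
The paper does not prove this lemma; it is quoted from \cite{gp,per}, so there is no in-paper argument to compare against. Your Step~1 is correct and is precisely the computation one finds in those references: the Euler field acts as $r\partial_r$ on functions of $(x_0,r)$ and annihilates $\underline{\omega}$, while $\underline{\omega}^2=-1$ collapses $\overline{\theta}f$ to the Cauchy--Riemann system for the stem function; separating the two components by evaluating at $\underline{\omega}$ and $-\underline{\omega}$ is routine.

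Your Step~2 is also along the lines of \cite{gp}: the key observation is indeed that $\overline{\theta}$ restricts to $2\,\overline{\partial_I}$ on each slice $\mathbb{C}_I$, and the $\mathcal{C}^1$ hypothesis lets holomorphy extend continuously across $\mathbb{R}$. The one place where your sketch is thin is the ``reassembling'' step. What actually makes it work is that near a real point $c\in D\cap\mathbb{R}$ each $f|_{\mathbb{C}_I}$ has a convergent expansion $\sum_n (x-c)^n a_n$ whose coefficients $a_n\in\mathbb{R}_{0,m}$ are read off from $f|_{\mathbb{R}}$ alone and are therefore independent of $I$; writing $(x-c)^n=u_n(x_0,r)+\underline{\omega}\,v_n(x_0,r)$ with real $u_n,v_n$ then exhibits $f$ as slice near $c$. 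To reach all of $\Omega_D$ you must propagate this by analytic continuation within each slice, which implicitly uses that the relevant component of $D$ meets $\mathbb{R}$. You should state this connectedness requirement explicitly, since without it the identity principle cannot touch components of $D$ disjoint from the real axis and the conclusion would fail there.
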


\section{Characterization of slice regular functions} \label{dsu1}
First, we give a  compact expression for the Dunkl spherical Dirac operator $\widetilde{\Gamma}_{\underline{\omega}}$ given in \eqref{ga1}. Denote \begin{equation} \label{eq1}
     \rho_{\underline\alpha}f(\underline{x}):=\frac{f(\underline{x})-f(\sigma_{\underline{\alpha}} \underline{x})}{\langle \underline{\alpha}, \underline{x}\rangle}=\int_{0}^{1}\partial_{\alpha}f(\underline{x}-t\langle \underline{\alpha},
     \underline{x}\rangle \underline{\alpha})\, {\rm d}t
\end{equation}
for $f\in \mathcal{C}^{1}(\mathbb{R}^{m})$, where $\partial_{\alpha}$ is the usual
directional derivative.
\begin{proposition}
\label{proposition expression Gamma} It holds that
    \begin{equation*}
       \gamma_{\kappa}+\Psi= -\underline{x}\sum_{\underline{\alpha}\in \mathscr{R}_+}\kappa(\underline{\alpha})\underline{\alpha}\,\,  \rho_{\underline\alpha},
    \end{equation*}
    where $\Psi$ is defined in \eqref{p21}.
    Thus, we have
    \begin{equation*}
        \widetilde{\Gamma}_{\underline\omega}=\Gamma_{\underline\omega}-\underline{x}\sum_{\underline{\alpha}\in \mathscr{R}_+}\kappa(\underline{\alpha})\underline{\alpha}  \, \rho_{\underline\alpha}.
    \end{equation*}
    \begin{proof}
        By definition \eqref{p21}, we have
        \begin{equation*}
            \Psi f(\underline x)=-\sum_{\underline{\alpha}\in \mathscr{R}_{+}}\kappa(\underline{\alpha})\underline{x}\wedge\underline{\alpha} \rho_{\underline\alpha} f(\underline x)-\sum_{\underline{\alpha}\in \mathscr{R}_+}\kappa(\underline{\alpha})f(\sigma_{\underline{\alpha}} \underline x).
        \end{equation*}
        It follows that
        \begin{equation*}
            \begin{split}
                ( \gamma_{\kappa}+\Psi)f(\underline x)&=\sum_{\underline{\alpha}\in \mathscr{R}_+}\kappa(\underline{\alpha})f(\underline x)-\sum_{\underline{\alpha}\in \mathscr{R}_+}\kappa(\underline{\alpha})\underline x\wedge\underline{\alpha} \rho_{\underline\alpha} f(\underline x)-\sum_{\underline{\alpha}\in \mathscr{R}_+}\kappa(\underline{\alpha})f(\sigma_{\underline{\alpha}} \underline x)\\
                &=\sum_{\underline{\alpha}\in \mathscr{R}_+}\kappa(\underline{\alpha})\left[f(\underline x)-f(\sigma_{\underline{\alpha}} \underline x)-\underline x\wedge\underline{\alpha} \rho_{\underline\alpha} f(\underline x)\right]\\
                &=\sum_{\underline{\alpha}\in \mathscr{R}_+}\kappa(\underline\alpha)\left[\langle \underline x, \underline{\alpha}\rangle \rho_{\underline\alpha} f(\underline{x})-\underline x\wedge\underline{\alpha} \rho_{\underline\alpha} f(\underline x)\right]\\
                &=-\underline{x}\sum_{\underline{\alpha}\in \mathscr{R}_+}\kappa(\underline{\alpha})\underline{\alpha}\,  \rho_{\underline\alpha} f(\underline x),
            \end{split}
        \end{equation*}
        where we have used \eqref{iw1} in the last step.
        Thus $\widetilde{\Gamma}_{\underline\omega}=\gamma_\kappa+\Gamma_{\underline\omega}+\Psi=\Gamma_{\underline\omega}- \underline{x}\sum_{\underline{\alpha}\in \mathscr{R}_+}\kappa(\underline{\alpha})\underline{\alpha} \rho_{\underline\alpha}$.
    \end{proof}
\end{proposition}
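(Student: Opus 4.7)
The plan is a direct algebraic manipulation, starting from the defining formula \eqref{p21} for $\Psi$, based on two standard Clifford identities. The first is the bivector identity $\sum_{i<j} e_i e_j (x_i \alpha_j - x_j \alpha_i) = \underline x \wedge \underline{\alpha}$, which lets us compress the double sum in \eqref{p21}. The second is the product decomposition of two vectors, $\underline x\,\underline{\alpha} = -\langle \underline x, \underline{\alpha}\rangle + \underline x \wedge \underline{\alpha}$, which will be used at the end to recombine an inner-product piece and a wedge piece into a single Clifford product. The whole argument is an operator identity, so I just need to check it when applied to an arbitrary $f\in\mathcal C^1(\mathbb R^m)$.

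First I would substitute the bivector identity together with the definition \eqref{eq1} of $\rho_{\underline\alpha}$ into \eqref{p21} to obtain the compact form
\begin{equation*}
\Psi f(\underline x) = -\sum_{\underline{\alpha} \in R_+} \kappa(\underline{\alpha})\,(\underline x \wedge \underline{\alpha})\, \rho_{\underline\alpha} f(\underline x) \;-\; \sum_{\underline{\alpha} \in R_+} \kappa(\underline{\alpha})\, f(\sigma_{\underline{\alpha}} \underline x).
\end{equation*}
Next I would add $\gamma_\kappa f(\underline x) = \sum_{\underline{\alpha} \in R_+} \kappa(\underline{\alpha}) f(\underline x)$ and group term-by-term over the roots. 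The key observation is that the difference $f(\underline x) - f(\sigma_{\underline{\alpha}} \underline x)$ equals $\langle \underline{\alpha}, \underline x\rangle\, \rho_{\underline\alpha} f(\underline x)$ by definition of $\rho_{\underline\alpha}$, so each summand reduces to $\kappa(\underline{\alpha})\bigl[\langle \underline x, \underline{\alpha}\rangle - (\underline x \wedge \underline{\alpha})\bigr]\rho_{\underline\alpha} f(\underline x)$.

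At this point the vector product identity $\underline x\,\underline{\alpha} = -\langle \underline x,\underline{\alpha}\rangle + \underline x \wedge \underline{\alpha}$ recombines the bracket as $-\underline x\,\underline{\alpha}$, giving $-\underline x\,\underline{\alpha}\,\rho_{\underline\alpha} f(\underline x)$ inside the sum. Since $\underline x$ is a fixed paravector that is not affected by $\rho_{\underline\alpha}$ and multiplication in $\mathbb R_{0,m}$ is associative, I can factor it out on the left of the whole sum, yielding the claimed identity $(\gamma_\kappa+\Psi)f = -\underline x\sum_{\underline{\alpha}\in R_+}\kappa(\underline{\alpha})\,\underline{\alpha}\,\rho_{\underline\alpha} f$. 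The second displayed equation is then immediate from the decomposition \eqref{ga1} of $\widetilde{\Gamma}_{\underline\omega}$.

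There is essentially no serious obstacle here, as the claim is a repackaging of \eqref{p21} via two standard Clifford identities. The only care needed is with signs and with the non-commutativity of Clifford multiplication: the factor $\underline x$ must be moved out only \emph{after} the bracket has been rewritten as $-\underline x\,\underline{\alpha}$, since one cannot reorder $\underline{\alpha}$ past $\underline x\wedge\underline{\alpha}$ or past $\langle\underline x,\underline{\alpha}\rangle$ separately in a consistent manner.
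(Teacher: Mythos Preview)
Your proposal is correct and follows essentially the same route as the paper's proof: compress \eqref{p21} via the bivector identity, add $\gamma_\kappa f$ and regroup using $f(\underline x)-f(\sigma_{\underline\alpha}\underline x)=\langle\underline\alpha,\underline x\rangle\rho_{\underline\alpha}f$, then recombine via $\underline x\,\underline\alpha=-\langle\underline x,\underline\alpha\rangle+\underline x\wedge\underline\alpha$. One small terminological slip: $\underline x$ here is a vector in $\mathbb R^m$, not a paravector.
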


For later use, we give some properties of $\rho_{\underline\alpha}$. We will abuse the notations $\rho_{\underline\alpha}f(\underline{x})$  and $\rho_{\underline\alpha}f(x_{0}+\underline{x})$, which will not cause any confusion when $\mathscr{R}\subset \mathbb{R}^{m}$.
\begin{lemma} \label{rp1}
For any $\underline{\alpha}\in \mathscr{R}$,  the operator $\rho_{\underline\alpha}$ in \eqref{eq1} has the following properties:
    \begin{enumerate}
    \item $\displaystyle \rho_{\underline\alpha} f(|\underline{x}|)=0$;
   \item $\displaystyle
   \rho_{\underline\alpha}(\underline\omega)=\frac{2}{|\underline x|}\frac{\underline{\alpha}}{|\underline{\alpha}|^2};
 $
   \item $\displaystyle \rho_{\underline\alpha} (f)=\frac{2\underline{\alpha}}{|\underline{\alpha}|^2}f'_s$ for any slice function $f$;
   \item 
   $\rho_{\underline{\alpha}}(f\circ\sigma_{\underline{\alpha}})=-\rho_{\underline{\alpha}}f$;
    \item $\displaystyle \rho_{\underline\alpha}^2f=0$ for any function.
\end{enumerate}
\end{lemma}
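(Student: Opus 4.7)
The plan is to verify each of the five properties by direct calculation from the definition
\[
\rho_{\underline\alpha}f(\underline{x})=\frac{f(\underline{x})-f(\sigma_{\underline\alpha}\underline{x})}{\langle\underline\alpha,\underline{x}\rangle},
\qquad
\sigma_{\underline\alpha}\underline{x}=\underline{x}-2\frac{\langle\underline\alpha,\underline{x}\rangle}{|\underline\alpha|^{2}}\underline\alpha,
\]
combined with two elementary facts about the reflection $\sigma_{\underline\alpha}$: it is an isometry (so $|\sigma_{\underline\alpha}\underline{x}|=|\underline{x}|$), it is an involution ($\sigma_{\underline\alpha}^{2}=\mathrm{id}$), and $\sigma_{\underline\alpha}\underline\alpha=-\underline\alpha$ yields $\langle\underline\alpha,\sigma_{\underline\alpha}\underline{x}\rangle=-\langle\underline\alpha,\underline{x}\rangle$.

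I would dispatch (1) and (2) first as warm-ups. Property (1) is immediate from isometry: a radial $f$ satisfies $f(\sigma_{\underline\alpha}\underline{x})=f(|\underline{x}|)$, killing the numerator. For (2), substituting $\underline\omega=\underline{x}/|\underline{x}|$ and using the explicit formula for $\sigma_{\underline\alpha}\underline{x}$ gives
\[
\underline{x}-\sigma_{\underline\alpha}\underline{x}=\frac{2\langle\underline\alpha,\underline{x}\rangle}{|\underline\alpha|^{2}}\underline\alpha,
\]
and dividing by $|\underline{x}|\langle\underline\alpha,\underline{x}\rangle$ yields the claim.

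For (3), the plan is to use the representation $f(x)=f_{s}^{\circ}(x)+\mathrm{Im}(x)f_{s}'(x)$ together with the fact that $f_{s}^{\circ}$ and $f_{s}'$ are constant on the spheres $\mathbb{S}_{x}$, hence depend on $\underline{x}$ only through $|\underline{x}|$. Since $\sigma_{\underline\alpha}$ fixes $x_{0}$ and $|\underline{x}|$, the only piece of $f$ affected by the reflection is the factor $\underline{x}$ multiplying $f_{s}'$. A short computation then gives
\[
f(x_{0}+\underline{x})-f(x_{0}+\sigma_{\underline\alpha}\underline{x})=(\underline{x}-\sigma_{\underline\alpha}\underline{x})\,f_{s}'=\frac{2\langle\underline\alpha,\underline{x}\rangle}{|\underline\alpha|^{2}}\,\underline\alpha\,f_{s}',
\]
and dividing by $\langle\underline\alpha,\underline{x}\rangle$ produces the stated identity. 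This step is the most substantive one, since it is the only place where the slice structure is actually used; but once one records that the spherical value and derivative are invariant under $\sigma_{\underline\alpha}$, the computation is essentially a one-liner.

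Finally, (4) and (5) follow formally. For (4), replacing $\underline{x}$ by $\sigma_{\underline\alpha}\underline{x}$ in the defining quotient and using $\sigma_{\underline\alpha}^{2}=\mathrm{id}$ together with $\langle\underline\alpha,\sigma_{\underline\alpha}\underline{x}\rangle=-\langle\underline\alpha,\underline{x}\rangle$ swaps numerator and denominator signs simultaneously, recovering $\rho_{\underline\alpha}f(\underline{x})$. Property (5) is then an immediate consequence: applying $\rho_{\underline\alpha}$ to $g:=\rho_{\underline\alpha}f$, the numerator $g(\underline{x})-g(\sigma_{\underline\alpha}\underline{x})$ vanishes by (4). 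I expect no serious obstacle in any of the five items; the only point requiring care is making sure, in (3), that $\rho_{\underline\alpha}$ is acting on the $\underline{x}$-variable with $x_{0}$ treated as a parameter, consistent with the remark preceding the lemma that identifies $\rho_{\underline\alpha}f(\underline{x})$ with $\rho_{\underline\alpha}f(x_{0}+\underline{x})$.
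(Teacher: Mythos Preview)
Your proposal is correct and essentially identical to the paper's proof: the paper handles (1), (2), (4), (5) by the very computations you outline, and for (3) it simply writes ``Immediate from (1) and (2)'', which is exactly your argument once one observes that $f=f^\circ_s+\underline{x}\,f'_s$ with $f^\circ_s,f'_s$ radial in $\underline{x}$.
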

\begin{proof}
    \begin{enumerate}
       \item It follows from $|\sigma_{\underline{\alpha}}(\underline x)|=|\underline x|$ that $\displaystyle \rho_{\underline\alpha} f(|\underline x|)=\frac{f(|\underline{x}|)-f(|\sigma_{\underline{\alpha}}\underline x|)}{\langle{\underline{\alpha}},\underline x\rangle}=0$.
        \item Since $\displaystyle \underline\omega(\sigma_{\underline{\alpha}}\underline x)=\frac{\sigma_{\underline{\alpha}}\underline x}{|\sigma_{\underline{\alpha}}\underline x|}=\frac{\sigma_{\underline{\alpha}}\underline x}{|\underline x|}=\underline\omega(\underline x)-\frac{2\langle{\underline{\alpha}},\underline{x}\rangle}{|\underline\alpha|^2|\underline x|}\underline{\alpha}$, we have
        \begin{equation*}
            \rho_{\underline\alpha}\underline\omega=\frac{2\langle {\underline{\alpha}}, \underline{x}\rangle}{|{\underline{\alpha}}|^2|\underline x|}\frac{{\underline{\alpha}}}{\langle {\underline{\alpha}}, \underline{x}\rangle }=\frac{2\underline{\alpha}}{|\underline{\alpha}|^2|\underline x|}.
        \end{equation*}
        \item \textcolor{black}{ For a slice function $f$,
        \begin{equation*}
            \rho_{\underline{\alpha}}(f)=\rho_{\underline{\alpha}}[f^\circ_s(x)+\operatorname{Im}(x)f'_s(x)]=\rho_{\underline{\alpha}}(f^\circ_s(x))+\rho_{\underline{\alpha}}(\operatorname{Im}(x)f'_s(x)).
        \end{equation*}
        The first term on the right side vanishes by (1). For the second term, by the facts $f_{s}'(\sigma_{\underline{\alpha}}x)=f_{s}'(x)$ and ${\rm Im}(x)=\underline{\omega}|\underline{x}|$, we have
        \begin{equation*}
            \rho_{\underline{\alpha}}(\operatorname{Im}(x)f'_s(x))=\rho_{\underline{\alpha}}(\underline{\omega})|\underline{x}|f_{s}'(x).
        \end{equation*}
        Now using (2), we obtain the formula (3).
        }
       \item 
        Since $\sigma_{\underline{\alpha}}^2=\operatorname{Id}$, for any $\underline{x}$ we have
        \begin{equation*}
            \rho_{\underline\alpha} (f\circ\sigma_{\underline{\alpha}})(\underline x)=\dfrac{f(\sigma_{\underline{\alpha}} \underline x)-f(\sigma_{\underline{\alpha}}^2 \underline x)}{\langle \underline{\alpha}, \underline x\rangle}=\dfrac{f(\sigma_{\underline{\alpha}}\underline x)-f(\underline x)}{\langle \underline{\alpha}, \underline x\rangle}=-\rho_{\underline\alpha} f(\underline x).
        \end{equation*}
        \item By (4), we have
        \begin{equation*}
            \rho_{\underline\alpha}^2 f\underline (\underline x)=\dfrac{\rho_{\underline\alpha} f(\underline x)-\rho_{\underline\alpha} f(\sigma_{\underline{\alpha}}\underline x)}{\langle \underline{\alpha}, \underline x\rangle}=\dfrac{\rho_{\underline\alpha} f(\underline x)-\rho_{\underline\alpha} f(\underline x)}{\langle \underline{\alpha},\underline x\rangle}=0.
        \end{equation*}
    \end{enumerate}
\end{proof}

\begin{proposition}[Characterization of sliceness]
\label{prop charcaterization sliceness}
    Let $f: \Omega_{D}\subset \mathbb{R}^{m+1}\to\mathbb{R}_{0, m}$ be a real analytic function. Suppose $\gamma_{\kappa}=(1-m)/2$ and $\kappa \not\in K^{{\rm sing}}$. Then the following conditions are equivalent:
    \begin{enumerate}
        \item  $f(x_{0}+\underline{x})$ is slice, (i.e., of the form \eqref{sf1});
        \item  $\displaystyle \widetilde{\Gamma}_{\underline\omega}f=0$ holds on $\Omega_{D}\backslash\mathbb{R}$;
    \end{enumerate}
    If one of the previous equivalent conditions holds, 
    then
    \begin{equation*}
        \Gamma_{\underline\omega}f= \gamma_{\kappa}\underline x\ \underline\alpha \rho_{\underline\alpha}f=-2 \gamma_{\kappa}\underline xf'_s,
    \end{equation*}
on $\Omega_{D}\backslash\mathbb{R}$ for every $\underline{\alpha}\in \mathscr{R}$. In particular,  $\underline{\alpha} \rho_{\underline{\alpha}}f=\underline{\beta} \rho_{\underline{\beta}}f$, for every $\underline{\alpha}, \underline{\beta}\in \mathscr{R}$ and slice function $f$.
\end{proposition}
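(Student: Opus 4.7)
The strategy is to work throughout with the compact expression
$\widetilde{\Gamma}_{\underline\omega} = \Gamma_{\underline\omega} - \underline{x}\sum_{\underline\alpha\in R_+} \kappa(\underline\alpha)\underline{\alpha}\,\rho_{\underline\alpha}$
from Proposition \ref{proposition expression Gamma}, and exploit Lemma \ref{rp1}(3) which converts $\rho_{\underline\alpha}$ applied to a slice function into an $\underline\alpha$-dependent multiple of the spherical derivative. The equivalence is then a matter of identifying the value of $\gamma_\kappa$ that forces cancellation between the classical angular part and the Dunkl correction.

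For $(1)\Rightarrow(2)$, I would first note that for a slice function $f=\alpha(x_0,r)+\underline\omega\beta(x_0,r)$, Lemma \ref{rp1}(3) gives $\rho_{\underline\alpha}f=\tfrac{2\underline\alpha}{|\underline\alpha|^2}f'_s$, so using $\underline\alpha\,\underline\alpha=-|\underline\alpha|^2$ in the Clifford algebra,
\begin{equation*}
\underline x\sum_{\underline\alpha\in R_+}\kappa(\underline\alpha)\underline\alpha\,\rho_{\underline\alpha}f
=\underline x\sum_{\underline\alpha\in R_+}\kappa(\underline\alpha)\frac{2\,\underline\alpha\,\underline\alpha}{|\underline\alpha|^2}f'_s
=-2\gamma_\kappa\,\underline x\,f'_s.
\end{equation*}
For the classical angular part I would use $\Gamma_{\underline\omega}f=\Gamma_{\underline\omega}(\underline\omega)\,\beta(x_0,r)=(m-1)\underline\omega\,\beta(x_0,r)=(m-1)\,\underline x\, f'_s$ (the $\alpha$-part is radial and is killed by $\Gamma_{\underline\omega}$; this is exactly the classical $\kappa=0$ analogue of \eqref{pk12}). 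Adding the two contributions gives $\widetilde{\Gamma}_{\underline\omega}f=(m-1+2\gamma_\kappa)\underline x\,f'_s$, which vanishes identically on $\Omega_D\setminus\mathbb{R}$ precisely when $\gamma_\kappa=(1-m)/2$.

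For $(2)\Rightarrow(1)$, the plan is to use a Fischer/spherical monogenic decomposition. Since $f$ is real analytic and $\kappa\notin K^{\mathrm{sing}}$, on each sphere centered at a point of the real axis we may expand
\begin{equation*}
f(x_0,\underline x)=\sum_{j\ge 0}\bigl(A_j(x_0,r)M_j(\underline\omega)+B_j(x_0,r)\,\underline\omega\,\widetilde M_j(\underline\omega)\bigr),
\end{equation*}
where $M_j,\widetilde M_j$ are Dunkl spherical monogenics of degree $j$ (homogeneous extensions of an orthonormal basis on $S^{m-1}$). Because $\widetilde{\Gamma}_{\underline\omega}$ commutes with multiplication by functions of $(x_0,r)$, applying \eqref{pk1} termwise with $\gamma_\kappa=(1-m)/2$ yields eigenvalues $-j$ on $M_j$ and $+j$ on $\underline\omega\widetilde M_j$. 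Hence $\widetilde{\Gamma}_{\underline\omega}f=0$ forces $A_j=B_j=0$ for all $j\ge 1$ (this is where the non-degeneracy condition $\kappa\notin K^{\mathrm{sing}}$ is used, to guarantee the decomposition is unique), leaving $f=A_0(x_0,r)+\underline\omega B_0(x_0,r)$, which is slice.

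Finally, the closing identity is extracted from the $(1)\Rightarrow(2)$ calculation: for slice $f$, writing $\widetilde{\Gamma}_{\underline\omega}f=0$ gives $\Gamma_{\underline\omega}f=\underline x\sum_{\underline\alpha\in R_+}\kappa(\underline\alpha)\underline\alpha\,\rho_{\underline\alpha}f=-2\gamma_\kappa\underline x\,f'_s$, while Lemma \ref{rp1}(3) shows $\underline\alpha\,\rho_{\underline\alpha}f=-2f'_s$ is manifestly independent of $\underline\alpha$, so the weighted sum collapses to $\gamma_\kappa\underline x\,\underline\alpha\,\rho_{\underline\alpha}f=-2\gamma_\kappa\underline x\,f'_s$ for any single root $\underline\alpha$. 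The main obstacle I anticipate is the rigorous justification of the spherical-monogenic expansion on an axially symmetric analytic domain (rather than for polynomials); this likely requires invoking the Dunkl Fischer decomposition for real analytic functions and controlling convergence, but the algebraic core of the argument is the eigenvalue identity \eqref{pk1} at the distinguished value $\gamma_\kappa=(1-m)/2$.
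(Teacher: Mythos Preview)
Your proof is correct and follows essentially the same strategy as the paper. For $(1)\Rightarrow(2)$ the paper invokes \eqref{pk12} directly on $\widetilde{\Gamma}_{\underline\omega}$ rather than splitting into $\Gamma_{\underline\omega}$ plus the Dunkl correction via Proposition~\ref{proposition expression Gamma} (both routes collapse to the identity $2\gamma_\kappa+m-1=0$), and your Fischer/eigenvalue argument for $(2)\Rightarrow(1)$ together with your derivation of the closing identity through Lemma~\ref{rp1}(3) coincide with the paper's proof.
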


\textcolor{black}{In order to prove the previous Proposition, we use a deformation of Fischer decomposition with respect to the Dunkl-Dirac operator $\underline{D}_{\kappa}$.
\begin{lemma}[\cite{oss}]\label{Dunkl-Dirac Fischer decomposition}
Let $\mathcal{P}_k$ denote the space of homogeneous polynomials of degree $k$ in $x_1,\dots x_m$ with values in $\mathbb{R}_{0,m}$, then $\mathcal{P}_k$ decomposes in
\begin{equation*}
\mathcal{P}_k=\bigoplus_{j=0}^{[k/2]}|\underline{x}|^{2j}\left(\mathcal{M}_{k-2j}+\underline{x}\mathcal{M}_{k-2j-1}\right),
\end{equation*}
where $\mathcal{M}_\ell$ denotes the space of homogeneous polynomials of degree $\ell$ which are in the kernel of $\underline{D}_{\kappa}$.
In other words, for any $P_k\in\mathcal{P}_k$ there exist unique $\mu_\ell,\nu_\ell\in\mathcal{M}_\ell$ such that
\begin{equation*}
P_k(x_1,\dots,x_m)=\sum_{j=0}^{[k/2]}|\underline{x}|^{2j}\left(\mu_{k-2j}(x_1\dots x_m)+\underline{x}\nu_{k-2j-1}(x_1,\dots x_m)\right).
\end{equation*}
\end{lemma}}

\begin{proof}[Proof of Proposition \ref{prop charcaterization sliceness}]
  $(1)\implies (2)$ Suppose $f$ is a slice function, i.e. $f(x_{0}+\underline{x})=\alpha(x_0,r)+\underline\omega\,\beta(x_0,r)$ with $r=|\underline{x}|$.  By the properties \eqref{pk12} of $\widetilde{\Gamma}_{\underline\omega}$, we have
    \begin{equation*}
    \begin{split}
\widetilde{\Gamma}_{\underline\omega}f(x_{0}+\underline{x})=&\,\widetilde{\Gamma}_{\underline\omega}\alpha(x_0,r)+\widetilde{\Gamma}_{\underline\omega}(\underline\omega\, \beta(x_0,r))\\=&\,(2 \gamma_{\kappa}+m-1)\underline\omega\,\beta(x_0,r)\\=&\,0.
    \end{split}
    \end{equation*}
$(2)\implies (1)$
For any fixed $x_{0}$, by Lemma \ref{Dunkl-Dirac Fischer decomposition} we  write $f(x_{0}+\underline{x})$ as
    \begin{equation} \label{dea}
        f(x_{0}+\underline x)=\sum_{k=0}^\infty P_{k}(x_0;\underline x)=\sum_{k=0}^\infty \left(\sum_{j=0}^{[k/2]}|\underline x|^{2j}(\mu^{(k)}_{k-2j}(x_0;\underline x)-\underline x \nu^{(k)}_{k-2j-1}(x_0;\underline x))\right),
    \end{equation}
    where for any fixed $x_0$, $\mu^{(k)}_\ell(x_0;\cdot),\nu^{(k)}_\ell(x_0;\cdot)$ are homogeneous Dunkl monogenic polynomial of degree $\ell$ on $\mathbb{R}^{m}$. 
  Furthermore, we remark that only the multiplicity function $\kappa\ge 0$ was considered in \cite{oss}, however it can be generalized for all non-singular $\kappa$, as done in the  scalar case \cite{ddo}. Similar to the classical case, we have  $\mu_{j}\perp \mu_{k}$ when $j\neq k$, and $\mu_{j}\perp \underline{x}\nu_{k}$ for any $j, k\in \mathbb{N}\cup\{0\}$ under the Fischer inner product. Now acting  $\widetilde{\Gamma}_{\underline\omega}$ term by term in the decomposition \eqref{dea}, together with the eigenvalues  of
    $\widetilde{\Gamma}_{\underline\omega}$ given in   \eqref{pk1} and \eqref{pk12},
     we see that
     \textcolor{black}{
     \begin{equation*}
    \begin{split}
    & \widetilde{\Gamma}_{\underline\omega}f(x_0+\underline{x})=\sum_{k=0}^{\infty}\sum_{j=0}^{[k/2]}|\underline{x}|^{2j}\left[\widetilde\Gamma_{\underline\omega}\left(\mu^{(k)}_{k-2j}(x_0;\underline x)\right)+\widetilde\Gamma_{\underline\omega}\left(\underline x\nu^{(k)}_{k-2j-1}(x_0;\underline x)\right)\right]\\
     &=\sum_{k=0}^{\infty}\sum_{j=0}^{[k/2]}|\underline{x}|^{2j}\left[(2j-k)\mu^{(k)}_{k-2j}(x_0;\underline x)+(2\gamma_\kappa+k-2j-1+m-1)\underline x\nu^{(k)}_{k-2j-1}(x_0;\underline x)\right]\\
     &=\sum_{k=0}^{\infty}\sum_{j=0}^{[k/2]}|\underline{x}|^{2j}\left[(2j-k)\mu^{(k)}_{k-2j}(x_0;\underline x)+(k-2j-1)\underline x\nu^{(k)}_{k-2j-1}(x_0;\underline x)\right].
     \end{split}
     \end{equation*}
     Thus,
    \begin{equation*}
       \widetilde{\Gamma}_{\underline\omega}f(x_{0}+\underline x)=0 \implies \left\{\begin{array}{ll}
       \mu^{(k)}_{k-2j}(x_0;\underline{x})=0&\text{ if }k\text{ is odd or if }k \text{ is even and } j\neq k/2\\
        \nu^{(k)}_{k-2j-1}(x_0;\underline{x})=0&\text{ if }k\text{ is even or if }k \text{ is odd and } j\neq \frac{k-1}{2}
       \end{array}
       \right.
    \end{equation*}
    In particular, we see that $f$ assumes the form
    \begin{equation*}
    f(x_0+\underline x)=\sum_{k=0}^\infty|\underline x|^{2k}\left(\mu_0^{(2k)}(x_0;\underline{x})+\underline{x}\nu_0^{2k+1}(x_0;\underline{x})\right).
    \end{equation*}
    But $\mu_0^{(2k)}(x_0;\underline x)=\mu_0^{(2k)}(x_0)$ and $\nu_0^{(2k+1)}(x_0;\underline x)=\nu_0^{(2k+1)}(x_0)$, thus by defining
    \begin{equation*}
    \alpha(x_0,|\underline{x}|)=\sum_{k=0}^\infty|\underline x|^{2k}\mu_0^{(2k)}(x_0)\qquad\beta(x_0,|\underline x|)=\sum_{k=0}^\infty|\underline x|^{2k+1}\nu_0^{(2k+1)}(x_0),
    \end{equation*}}
    we see that $f$ is of the form \eqref{sf1}, so $f$ is a slice function.
    
    For the last claim, the second equality follows from Lemma \ref{rp1} (3), while assuming $\widetilde{\Gamma}_{\underline\omega}f=0$ and by Proposition \ref{proposition expression Gamma} we have
    \begin{equation*}
        \Gamma_{\underline\omega}f=\underline x\sum_{\underline\alpha\in \mathscr{R}_+}\kappa(\underline\alpha)\underline\alpha \rho_{\underline\alpha}f=-2\underline x\sum_{\underline\alpha\in \mathscr{R}_+}\kappa(\underline\alpha)f'_s=-2 \gamma_{\kappa}\underline xf'_s= \gamma_{\kappa}\underline x\ \underline\alpha \rho_{\underline\alpha}f.
    \end{equation*}
\end{proof}
\begin{lemma} \label{st1}
Let $\gamma_{\kappa}=(1-m)/2$. Then $\overline{\theta}$ and $D_\kappa$ agree on slice functions of class $\mathcal{C}^{1}(\Omega_{D}\backslash\mathbb{R})$.
\end{lemma}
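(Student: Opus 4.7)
The plan is to bring both operators into spherical coordinates and reduce the identity to the vanishing of $\widetilde{\Gamma}_{\underline{\omega}}$ on slice functions, which is precisely what Proposition \ref{prop charcaterization sliceness} supplies. First I would apply the Euler identity $\sum_{j=1}^{m}x_{j}\partial_{x_{j}}=r\partial_{r}$ (valid on $\Omega_{D}\setminus\mathbb{R}$ for any $\mathcal{C}^{1}$ function of $\underline{x}$) to rewrite the non-trivial part of $\overline{\theta}$ as $\frac{\underline{x}}{|\underline{x}|^{2}}\cdot r\partial_{r}=\underline{\omega}\,\partial_{r}$. This yields the compact form
\[
\overline{\theta}=\partial_{x_{0}}+\underline{\omega}\,\partial_{r}
\]
on $\mathcal{C}^{1}(\Omega_{D}\setminus\mathbb{R})$.

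Next, by Lemma \ref{fc1}, the Dunkl-Cauchy-Riemann operator admits the spherical expression
\[
D_{h}=\partial_{x_{0}}+\underline{D}_{h}=\partial_{x_{0}}+\underline{\omega}\,\partial_{r}+\frac{\underline{\omega}}{r}\widetilde{\Gamma}_{\underline{\omega}},
\]
so that $D_{h}-\overline{\theta}=\frac{\underline{\omega}}{r}\widetilde{\Gamma}_{\underline{\omega}}$ on $\Omega_{D}\setminus\mathbb{R}$. The claim therefore reduces to showing that $\widetilde{\Gamma}_{\underline{\omega}}f=0$ whenever $f\in\mathcal{C}^{1}(\Omega_{D}\setminus\mathbb{R})$ is slice and $\gamma_{\kappa}=(1-m)/2$. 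This is exactly the content of the implication $(1)\Rightarrow(2)$ in Proposition \ref{prop charcaterization sliceness}.

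The only point that needs a brief check is regularity, since Proposition \ref{prop charcaterization sliceness} is stated under a real-analyticity hypothesis. The direction we invoke is, however, a pointwise computation: writing $f(x_{0}+\underline{x})=\alpha(x_{0},r)+\underline{\omega}\,\beta(x_{0},r)$ and applying the identities \eqref{pk12}, one gets $\widetilde{\Gamma}_{\underline{\omega}}f=(2\gamma_{\kappa}+m-1)\underline{\omega}\,\beta(x_{0},r)$, which vanishes precisely under the assumption $\gamma_{\kappa}=(1-m)/2$. No Fischer decomposition is needed here, so the argument carries over verbatim to the $\mathcal{C}^{1}$ setting. I do not anticipate any serious obstacle; once the Euler rewriting and the spherical form of $D_{h}$ are on the table, the proof is essentially a one-line comparison.
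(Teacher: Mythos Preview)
Your argument is correct, and it takes a genuinely different route from the paper's proof. The paper quotes an external identity from \cite{per}, namely $\overline{\partial}_{x}f-\overline{\theta}f=(1-m)f'_{s}$ on slice functions, and then rewrites the difference $D_{h}f-\overline{\partial}_{x}f=\sum_{\underline{\alpha}\in R_{+}}\kappa(\underline{\alpha})\underline{\alpha}\,\rho_{\underline{\alpha}}f$ using Lemma~\ref{rp1}(3), which gives $\underline{\alpha}\,\rho_{\underline{\alpha}}f=-2f'_{s}$; adding the two pieces yields $D_{h}f=\overline{\theta}f$ under $\gamma_{\kappa}=(1-m)/2$. You instead establish the operator identity $D_{h}-\overline{\theta}=\dfrac{\underline{\omega}}{r}\widetilde{\Gamma}_{\underline{\omega}}$ on all of $\mathcal{C}^{1}(\Omega_{D}\setminus\mathbb{R})$ via the Euler relation and Lemma~\ref{fc1}, and then kill the right-hand side on slice functions by the pointwise computation underlying $(1)\Rightarrow(2)$ of Proposition~\ref{prop charcaterization sliceness}. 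Your approach is more self-contained (no appeal to \cite{per}) and makes transparent that the lemma is really equivalent to the vanishing of the Dunkl spherical Dirac on slice functions; the paper's approach has the advantage of tying the statement explicitly to the spherical derivative $f'_{s}$ and the reflection terms $\rho_{\underline{\alpha}}$, which are the objects driving the rest of Section~\ref{dsu1}. Your remark about the regularity hypothesis is also well taken: the forward direction of Proposition~\ref{prop charcaterization sliceness} uses only \eqref{pk12} and needs nothing beyond $\mathcal{C}^{1}$.
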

\begin{proof}
    If $f$ is a slice function, then from \cite[Proposition 3.2 (b)]{per} it holds
    \begin{equation*}
        \overline{\partial}_{x}f-\overline{\theta}f=2\gamma_{\kappa}f'_s.
    \end{equation*}
    Hence, we get
    \begin{equation*}
        \overline{\theta}f=\overline{\partial}_{x}f-2\gamma_{\kappa}f'_s=\overline{\partial}_{x}f+\sum_{\underline{\alpha}\in \mathscr{R}_+}\kappa(\underline{\alpha})\underline{\alpha} \rho_{\underline\alpha} f=D_\kappa f,
    \end{equation*}
    where we have used   $\underline{\alpha} \rho_{\underline\alpha} f=-2f'_s$ for any slice function $f$.
\end{proof}
\begin{theorem}[Characterization of slice regularity]  \label{sm1}  Assume $\gamma_{\kappa}=(1-m)/2$ and $\kappa \not\in K^{{\rm sing}}$. Let $f: \Omega_{D}\subset \mathbb{R}^{m+1}\to\mathbb{R}_{0, m}$ be a real analytic function.

{\rm (a)}  Suppose $f$ is slice, then $f$ is slice regular if and only if  $D_\kappa f=0$.

{\rm (b)} If  $\Omega_D\cap\mathbb{R}\neq\emptyset$,
then the following conditions are equivalent:
    \begin{enumerate}
        \item  $f$ is slice regular;
        \item  $f\in\ker D_\kappa \cap \ker \widetilde{\Gamma}_{\underline\omega}$.
    \end{enumerate}
\end{theorem}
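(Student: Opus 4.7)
The theorem packages Lemma \ref{slice1} and Proposition \ref{prop charcaterization sliceness} through the identification $\overline\theta = D_h$ on slice functions supplied by Lemma \ref{st1}. My plan is to prove (a) as a direct translation, and to prove (b) by first extracting slicedness of $f$ from $\widetilde\Gamma_{\underline\omega}f = 0$ via Proposition \ref{prop charcaterization sliceness} and then invoking (a).

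\textbf{Part (a).} For a slice function $f \in \mathcal{C}^1(\Omega_D)$, Lemma \ref{st1} yields $D_hf = \overline\theta f$ on $\Omega_D\setminus\mathbb{R}$; combined with Lemma \ref{slice1} this gives at once the equivalence $f\text{ slice regular}\iff D_hf = 0\text{ on }\Omega_D\setminus\mathbb{R}$. To promote the vanishing to all of $\Omega_D$, I would note that the integral representation \eqref{eq1} makes $D_hf$ continuous on $\Omega_D$ for $f \in \mathcal{C}^1(\Omega_D)$, and that $\Omega_D\setminus\mathbb{R}$ is dense in the open set $\Omega_D$.

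\textbf{Part (b) and the main obstacle.} The direction $(1)\Rightarrow(2)$ is immediate: a slice regular $f$ is a slice function, so the easy half of Proposition \ref{prop charcaterization sliceness} (which rests only on the eigenvalue formulas \eqref{pk12} and needs no more than $\mathcal{C}^1$ regularity) yields $\widetilde\Gamma_{\underline\omega}f = 0$, and part (a) supplies $D_hf = 0$. Conversely, assuming $D_hf = 0$ and $\widetilde\Gamma_{\underline\omega}f = 0$, the plan is to apply the nontrivial half of Proposition \ref{prop charcaterization sliceness} to conclude that $f$ is slice, and then to invoke part (a) to upgrade slice to slice regular. The main obstacle is that Proposition \ref{prop charcaterization sliceness} requires real analyticity, whereas here we only assume $f \in \mathcal{C}^1(\Omega_D)$. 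I would bridge this gap by elliptic regularity: applying $\partial_{x_0} - \underline D_h$ to $D_h f = 0$ gives $(\partial_{x_0}^2 + \sum_{j=1}^m T_j^2)f = 0$, so $f$ is Dunkl-harmonic on $\mathbb{R}^{m+1}$, and for $\kappa\not\in K^{\rm sing}$ Dunkl-harmonic functions are known to be real analytic, making Proposition \ref{prop charcaterization sliceness} applicable.
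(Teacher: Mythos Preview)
Your proposal is correct and follows essentially the same route as the paper: both combine Lemma \ref{slice1}, Lemma \ref{st1}, and Proposition \ref{prop charcaterization sliceness}, and both handle the regularity gap in (b) by noting that $D_hf=0$ forces $f$ to be real analytic. Your write-up is more explicit than the paper's one-line remark, spelling out the factorization through the Dunkl Laplacian and the continuity argument on $\Omega_D\setminus\mathbb{R}$, but the underlying argument is the same.
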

\begin{proof}
   It follows from Proposition \ref{prop charcaterization sliceness}, Lemma \ref{st1} above and that $f$ is slice regular if and only if $\overline{\theta}f=0$ (see Lemma \ref{slice1}). 
\end{proof}
\begin{remark} \textcolor{black}{For certain specific root systems, the real analytic condition of $f$ may be relaxed to $\mathcal{C}^{1}(\Omega_{D})$. Here, we include this condition  to avoid delving into  too much technical  complexities associated with  negative  multiplicity functions.}
\end{remark}

Let $\Delta_{\mathbb{R}^{m+1}}=\sum_{j=0}^{m}\partial_{x_j}^{2}$ be the Laplace operator on $\mathbb{R}^{m+1}$. By choosing a specific root system in Theorem \ref{sm1}, we obtain  the following  connection between  slice regular functions and hyperbolic harmonic functions.
\begin{theorem}
 \label{sld1} Suppose $f$ is  slice regular, and denote $\underline{\omega}=\sum_{j=1}^{m}e_{j}\omega_{j}$ with $\omega_{j}\in \mathbb{R}$. Then  $\alpha(x_{0}, r) $ and   $\omega_{j} \beta(x_{0}, r)$ with $1\le j\le m-1$ are hyperbolic harmonic, i.e.
 \begin{equation} \label{a1}
    \left\{
  \begin{array}{ll}
    \displaystyle\left(x_{m}^{2}\Delta_{\mathbb{R}^{m+1}} -(m-1)x_{m} \partial_{x_{m}}\right)\alpha(x_{0}, r)=0, \\

  \displaystyle \left(x_{m}^{2}\Delta_{\mathbb{R}^{m+1}} -(m-1)x_{m} \partial_{x_{m}}\right) \omega_{j} \beta(x_{0}, r)=0,
  \end{array}
\right.
\end{equation}
while  $ \omega_{m}\beta(x_{0}, r)$ is the eigenfunction of the hyperbolic Laplace operator  corresponding to the eigenvalue $(1-m)$, i.e.
\begin{equation*}
 \left(x_{m}^{2}\Delta_{\mathbb{R}^{m+1}} -(m-1)x_{m} \partial_{x_{m}}\right) \omega_{m}\beta(x_{0}, r)=(1-m) \omega_{m}\beta(x_{0}, r).
\end{equation*}
\end{theorem}
\begin{proof} 

(1) Note that $\alpha(x_{0}, r)$ is harmonic in $x_{0}$ and $r$, i.e.  $(\partial^{2}_{x_{0}}+\partial_{r}^{2})\alpha(x_{0}, r)=0$. \textcolor{black}{Direct computation yields
the first equation in \eqref{a1}, i.e.
\begin{equation*}
\begin{split}
     &\left(x_{m}^{2}\Delta_{\mathbb{R}^{m+1}} -(m-1)x_{m} \partial_{x_{m}}\right)\alpha(x_{0}, r)\\
     = & \,\left(x_{m}^{2} \left(\partial_{x_{0}}^{2}+\partial_{r}^{2}+\frac{m-1}{r}\partial_{r}\right)-x_{m}^{2} \frac{(m-1)}{r}\partial_{r} \right)\alpha(x_{0}, r) \\
     = & \, x_{m}^{2}\left(\partial^{2}_{x_{0}}+\partial_{r}^{2}\right)\alpha(x_{0}, r)\\
     = & \, 0.
\end{split}
\end{equation*}}

(2) By Theorem \ref{sm1},  we have $D_{\kappa}f=0$.  This obviously implies that  $f$ is Dunkl-harmonic, i.e.
\textcolor{black}{\begin{equation*}
    \Delta_{\kappa}f=\overline{D_{\kappa}} D_{\kappa}f=0.
\end{equation*}}
We consider the following specific Dunkl-Cauchy-Riemann operator,
\begin{equation*}
    D_{\kappa}= \partial_{x_{0}}+e_{1}\partial_{x_{1}} +\cdots+e_{m-1}\partial_{x_{m-1}}+e_{m}T_{m},
\end{equation*}
where  \begin{equation*}\displaystyle T_{m}f(x)=\partial_{x_{m}}f(x)+\frac{1-m}{2}\frac{f(x)-f(\sigma_{e_{m}}x)}{x_{m}}.\end{equation*}
\textcolor{black}{ Straightforward  computation shows that \begin{equation*}
    		\begin{split}
    			T_{m}^{2}f=\left\{
    			\begin{array}{ll}
    				 \displaystyle  \partial_{x_{m}}^{2}f+\frac{1-m}{x_{m}}\partial_{x_{m}}f, & \mbox{when $f$ is even on $x_{m}$} ,\\
    				\displaystyle \partial_{x_{m}}^{2}f+\frac{1-m}{x_{m}}\partial_{x_{m}}f-\frac{1-m}{x_{m}^{2}}f, & \mbox {when $f$ is odd on $x_{m}$}.
    			\end{array}
    			\right.
    		\end{split}
\end{equation*}
Now since $\alpha(x_{0}, r)$ is even on $x_{m}$, the result in (1) is equivalent with \[\Delta_{\kappa}\,\alpha(x_{0}, r)=
       \left(\sum_{j=0}^{m-1}\partial_{x_{j}}^{2}+T_{m}^{2}\right) \,\alpha(x_{0}, r)=0.\]
Together with $\Delta_{\kappa}f=0$, we get
\begin{equation} \label{dbe1}\begin{split}
       &\Delta_{\kappa}\,\underline{\omega}\,\beta(x_{0}, r)=
       \left(\sum_{j=0}^{m-1}\partial_{x_{j}}^{2}+T_{m}^{2}\right) \,(f-\alpha(x_{0},r))=0.
\end{split}
\end{equation}}
The equation \eqref{dbe1} together with the fact  that $x_{j}\beta(x_{0}, r)$ is even on $x_{m}$ when $1\le j\le m-1$ and odd when $j=m$, yields
 the claimed relations for $\beta$, which concludes the proof.
\end{proof}
\begin{remark} The two equations of $\beta$ can also be verified by direct computation.
\end{remark}
\begin{remark} Note that  $\alpha(x_{0}, r)$ and $\beta(x_{0}, r)$ here can be $\mathbb{R}_{0,m}$-valued.
\end{remark}
\begin{remark} This result should be compared with the similar property of $k$-hypermonogenic functions (see e.g. \cite{el23,eo}).

\end{remark}
\begin{corollary}[Local decomposition]  Suppose $f$ is a slice regular function of the form \eqref{sf1}. Then the following holds when $x_{m}\neq 0$,
\begin{equation*}
    \begin{split}
        \alpha(x_{0}, r)=& \left(1- \frac{\underline{x}e_{m}}{m-1}\left(x_{m}\Delta_{\mathbb{R}^{m+1}} -(m-1)\partial_{x_{m}}\right)\right)f(x_{0}+\underline{x}),\\
        \beta(x_{0},r)=& \frac{e_{m}|\underline{x}|}{m-1}\left(x_{m}\Delta_{\mathbb{R}^{m+1}} -(m-1) \partial_{x_{m}}\right)f(x_{0}+\underline{x}).
    \end{split}
\end{equation*}
\end{corollary}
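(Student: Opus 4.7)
The plan is to invert the decomposition $f(x_{0}+\underline{x}) = \alpha(x_{0},r) + \underline\omega\,\beta(x_{0},r)$ by applying the hyperbolic Laplace operator $L := x_{m}^{2}\Delta_{m+1} - (m-1)x_{m}\partial_{x_{m}}$ to $f$, using the spectral information supplied by Theorem \ref{sld1}. The key observation is that $L$ annihilates the scalar part $\alpha$ and acts as a nonzero scalar on $\underline\omega\,\beta$, so that $L$ and $L+2(m-1)$ together diagonalize the decomposition, producing $\underline\omega\,\beta$ and $\alpha$ respectively.

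First I would apply $L$ termwise to $f = \alpha + \underline\omega\,\beta$. By Theorem \ref{sld1} we have $L\alpha = 0$ and $L(\underline\omega\,\beta) = 2(1-m)\underline\omega\,\beta$, hence $Lf = -2(m-1)\underline\omega\,\beta$, which gives $\underline\omega\,\beta = -\tfrac{1}{2(m-1)}Lf$. Multiplying on the left by $\underline\omega$ and using $\underline\omega^{2}=-1$ then yields the stated formula for $\beta(x_{0},r)$.

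Next, to extract $\alpha$, I would apply the shifted operator $L+2(m-1)$ to $f$. By the same eigenvalue data, this operator sends $\alpha$ to $2(m-1)\alpha$ and annihilates $\underline\omega\,\beta$, so $(L+2(m-1))f = 2(m-1)\alpha$. Dividing by $2(m-1)$ gives the formula for $\alpha(x_{0},r)$.

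The argument is entirely linear-algebraic on the two-dimensional eigenspace spanned by $\{1,\underline\omega\}$ once Theorem \ref{sld1} is known, so there is no serious obstacle; the only tacit assumption is $m\neq 1$, ensuring that the scalar $\tfrac{1}{2(m-1)}$ makes sense, which is already implicit in the ambient setting of Theorem \ref{sld1}.
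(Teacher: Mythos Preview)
Your argument is correct and is precisely the computation the paper has in mind; the paper's own proof consists of the single line ``It follows from Theorem \ref{sld1},'' and you have simply unpacked that sentence by using the eigenvalue data $L\alpha=0$, $L(\underline\omega\,\beta)=2(1-m)\underline\omega\,\beta$ to solve for $\alpha$ and $\beta$.
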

\begin{proof}
    It follows from Theorem \ref{sld1}.
\end{proof}
\begin{remark} Similarly when $x_{m}\neq 0$,  the  spherical derivative $f_{s}'(x_{0}+\underline{x})$ of $f$ can be expressed  locally as
\begin{equation*}
    f_{s}'(x_{0}+\underline{x})=\frac{e_{m}}{m-1} \left(x_{m}\Delta_{\mathbb{R}^{m+1}} -(m-1) \partial_{x_{m}}\right)f(x_{0}+\underline{x}),
\end{equation*}
instead of using \eqref{qe1} which uses conjugation in a global way.
\end{remark}
\section{Fueter's theorem revisited}\label{fuet1}
   Let $D$ be an open subset of $\mathbb{C}$, symmetric with respect to the real axis and let $F:D\to\mathbb{R}_{0,m}\otimes\mathbb{C}$ be a holomorphic stem function defined by
    \begin{equation*}
        F(z)=\alpha(s, t)+i\beta(s, t), \qquad z=s+it\in D.
    \end{equation*}
    Then  $F$
    induces a slice regular function
    \begin{equation} \label{isr2}
        f(x_{0}+\underline{x})=\alpha(x_{0}, |x|)+\frac{\underline{x}}{|\underline{x}|}\beta(x_{0}, |\underline{x}|)
    \end{equation}
    on the set  $ \Omega_{D}:=\{x_{0}+\underline{x}\in \mathbb{R}^{m+1}: (x_{0}, |x|)\in D\}$. When $m$ is odd,
    the classical Fueter's theorem (see e.g. \cite{fue, qian, sce}) states that  the function
    \begin{equation*}
        \Delta_{\mathbb{R}^{m+1}}^{\frac{m-1}{2}}   f(x_{0}+\underline{x})
    \end{equation*}
    is monogenic. It was already proved in Theorem \ref{sm1} that $f(x_{0}+\underline{x}) \in \ker D_\kappa$ with $\gamma_{\kappa}=(1-m)/2$.
    In this section, we show that the functions induced in Fueter's scheme, i.e.
    \begin{equation*}
        \Delta_{\mathbb{R}^{m+1}}  f(x_{0}+\underline{x}), \quad \Delta_{\mathbb{R}^{m+1}} ^{2}f(x_{0}+\underline{x}), \quad \ldots, \quad \Delta_{\mathbb{R}^{m+1}} ^{\frac{m-3}{2}}   f(x_{0}+\underline{x}),
    \end{equation*} are also in the kernel of certain $D_{\kappa}$ and there is an interesting shift property on the index $\gamma_{\kappa}$.  Note that most of the computations have already been done in \cite{fck}. Here we only provide a different viewpoint.

\begin{definition} \label{defi41} Let $f(x_{0}+\underline{x})$ be a slice  function of the form
\begin{equation}\label{sr21}
    f(x_{0}+\underline{x})=\alpha(x_{0}, r)+\underline{\omega}\, \beta(x_{0}, r)
\end{equation}
with $r=|\underline{x}|$.
We define
\begin{equation*}
\begin{split}
    \mathcal{V}_{n}f:=&\left(\frac{1}{r}\partial_{r}\right)^{n}\alpha(x_{0}, r)+\underline{\omega}\left(\partial_{r}\frac{1}{r}\right)^{n}\beta(x_{0}, r)\\
    :=& A(x_{0}, r)+\underline{\omega} B(x_{0}, r).
\end{split}
\end{equation*}
\end{definition}
\begin{lemma} \cite{fck} For a slice regular function $f(x)$ of the form \eqref{sr21}, we have
\begin{equation*}
    		\begin{split}
    			\mathcal{V}_{n}f(x_{0}+\underline{x})\sim \left\{
    			\begin{array}{ll}
    				  \Delta_{\mathbb{R}^{m+1}}^{n} f , & 0\le  n\le (m-1)/2\in \mathbb{N},\\
    				\Delta_{\kappa}^{n}f, & 0\le  n\le \gamma_{\kappa}+(m-1)/2\in \mathbb{N}.\\
    			\end{array}
    			\right.
    		\end{split}
\end{equation*}
Here the notation $\sim$ means that they are equal up to a non-zero real constant and $\Delta_{\kappa}=\sum_{j=0}^{m}T_{j}^{2}$ is the Dunkl Laplacian on $\mathbb{R}^{m+1}$.  (Note that there is an overlap for the domain of $n$ above. )
\end{lemma}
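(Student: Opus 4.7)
The plan is to use the ``polar'' coordinates $(x_0,r,\underline{\omega})$ of Lemma~\ref{fc1} to reduce the action of $\Delta_h = \partial_{x_0}^2 + \sum_{j=1}^m T_j^2$ on a slice function $f = \alpha(x_0,r) + \underline{\omega}\,\beta(x_0,r)$ to two scalar operators acting on the components $\alpha,\beta$. The classical statement $\Delta_{m+1}^n f \sim \mathcal{V}_n f$ will follow from the Dunkl one by specializing to $\gamma_\kappa = 0$, so I would treat both cases at once. The first step is to establish the base identity
\begin{equation*}
\Delta_h f \;=\; \bigl[\Box\alpha + (m-1+2\gamma_\kappa)T\alpha\bigr] + \underline{\omega}\bigl[\Box\beta + (m-1+2\gamma_\kappa)T'\beta\bigr],
\end{equation*}
where $\Box := \partial_{x_0}^2+\partial_r^2$, $T := \tfrac{1}{r}\partial_r$, and $T' := \partial_r\tfrac{1}{r}$. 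For the scalar part this is immediate from the standard radial expression for $\sum_{j=1}^m T_j^2$; for the $\underline{\omega}$-part I would write $\underline{\omega}\beta = \underline{x}\,(\beta/r)$ and compute $\sum_{j=1}^m T_j^2(\underline{x}h) = -\underline{D}_h^2(\underline{x}h)$ using the anticommutation $\underline{D}_h\,\underline{x}+\underline{x}\,\underline{D}_h = -(2\sum_j x_j\partial_{x_j} + m + 2\gamma_\kappa)$ together with $\underline{x}\,\underline{\omega}=-r$. Since $f$ is slice regular, $\alpha$ and $\beta$ are harmonic in $(x_0,r)$, so the two $\Box$-terms vanish and one recovers the case $n=1$: $\Delta_h f = (m-1+2\gamma_\kappa)\,\mathcal{V}_1 f$.

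The induction then rests on two operator identities on smooth functions of $(x_0,r)$,
\begin{equation*}
[\Box,\,T] \;=\; -2T^2, \qquad [\Box,\,T'] \;=\; -2(T')^2,
\end{equation*}
each of which is a short direct calculation (using $\partial_r^2(g/r) = \tfrac{1}{r}\partial_r^2 g - \tfrac{2}{r^2}\partial_r g + \tfrac{2}{r^3}g$ and its variant with $1/r^2$). From $\Box\alpha=\Box\beta=0$ and a one-line induction on the recursion $\Box T^{k+1}\alpha = T\,\Box T^k\alpha + [\Box,T]T^k\alpha$ one obtains
\begin{equation*}
\Box\, T^k\alpha \;=\; -2k\,T^{k+1}\alpha, \qquad \Box\,(T')^k\beta \;=\; -2k\,(T')^{k+1}\beta \qquad (k\ge 0).
\end{equation*}
Substituting these into the base identity applied to the slice function $T^k\alpha + \underline{\omega}(T')^k\beta$ yields the crucial shift rule
\begin{equation*}
\Delta_h\!\left(T^k\alpha + \underline{\omega}(T')^k\beta\right) \;=\; (m-1-2k+2\gamma_\kappa)\!\left(T^{k+1}\alpha + \underline{\omega}(T')^{k+1}\beta\right).
\end{equation*}
Iterating this for $k=0,1,\ldots,n-1$ immediately gives
\begin{equation*}
\Delta_h^n f \;=\; \prod_{j=0}^{n-1}\bigl(m-1-2j+2\gamma_\kappa\bigr)\,\mathcal{V}_n f,
\end{equation*}
and the product is a nonzero real constant precisely when $n\le \gamma_\kappa+(m-1)/2$; setting $\gamma_\kappa = 0$ recovers the classical statement in the range $n\le (m-1)/2$.

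The main obstacle is really only the base identity: once the two commutator identities are in place, the rest of the argument is mechanical. The subtle point to track is that the intermediate functions $T^k\alpha$ and $(T')^k\beta$ are \emph{not} harmonic in $(x_0,r)$ for $k\ge 1$, so the $\Box$-terms in the base identity re-enter at each step of the induction; it is precisely the identities $[\Box,T]=-2T^2$ and $[\Box,T']=-2(T')^2$ that convert these ``error'' terms into a clean shift $-2k$ of the multiplicative constant, allowing the induction to close without producing any lower-order residue.
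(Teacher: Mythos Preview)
Your argument is correct and complete. The paper itself does not give a proof of this lemma: it simply refers the reader to Lemma~4.2 in \cite{fck}. Your computation is essentially the one carried out there: write the (Dunkl) Laplacian on a slice function in the radial form
\[
\Delta_h\bigl(A(x_0,r)+\underline{\omega}\,B(x_0,r)\bigr)=\bigl[\Box A+(m-1+2\gamma_\kappa)\tfrac{1}{r}\partial_r A\bigr]+\underline{\omega}\bigl[\Box B+(m-1+2\gamma_\kappa)\,\partial_r\tfrac{1}{r}B\bigr],
\]
use that slice regularity gives $\Box\alpha=\Box\beta=0$, and then iterate. Your presentation via the commutator identities $[\Box,T]=-2T^2$ and $[\Box,T']=-2(T')^2$ is a clean way to package the induction step; it produces exactly the same constant
\[
\Delta_h^{\,n}f=\prod_{j=0}^{n-1}\bigl(m-1-2j+2\gamma_\kappa\bigr)\,\mathcal{V}_n f,
\]
from which both statements (the classical one with $\gamma_\kappa=0$ and the Dunkl one) follow, together with the ranges of $n$ for which the constant is nonzero.
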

\begin{proof}
 \textcolor{black}{The second line is proved in  Lemma 4.2 in \cite{fck}, where the first line is proved implicitly there.}
\end{proof}

\begin{lemma} \cite{fck} \label{fg1} A slice function  $f$ on $\Omega_{D}\subset \mathbb{R}^{m+1}$ of the form  \eqref{sr21}
 is Dunkl monongenic, i.e., \,$D_{\kappa}f=0$,  if and only if $\alpha(x_{0}, r)$ and $\beta(x_{0}, r)$ satisfy the Vekua-type system
\begin{equation*}
    \left\{
  \begin{array}{ll}
    \partial_{x_{0}}\alpha-\partial_{r}\beta=\displaystyle \frac{2\gamma_{\kappa}+m-1}{r} \beta, \\
   \partial_{x_{0}}\beta+\partial_{r}\alpha=0.
  \end{array}
\right.
\end{equation*}
\end{lemma}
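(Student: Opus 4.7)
The plan is to compute $D_h f = (\partial_{x_0} + \underline{D}_h)f$ directly on a slice function $f = \alpha(x_0,r) + \underline{\omega}\beta(x_0,r)$, using the spherical form $\underline{D}_h = \underline{\omega}\bigl(\partial_r + \tfrac{1}{r}\widetilde{\Gamma}_{\underline{\omega}}\bigr)$ supplied by Lemma \ref{fc1}. The key simplification is that, once $x_0$ is frozen as a parameter, $\alpha(x_0,\cdot)$ is a purely radial function of $\underline{x}$ while $\underline{\omega}\beta(x_0,\cdot)$ has the shape $\underline{\omega}\cdot(\text{radial})$. Then the eigenvalue identities in \eqref{pk12} give immediately $\widetilde{\Gamma}_{\underline{\omega}}\alpha = 0$ and $\widetilde{\Gamma}_{\underline{\omega}}(\underline{\omega}\beta) = (2\gamma_\kappa + m - 1)\underline{\omega}\beta$, where I am implicitly using that $\widetilde{\Gamma}_{\underline{\omega}}$ is right $\mathbb{R}_{0,m}$-linear so that the Clifford-valued radial factor $\beta$ can be pulled past it.

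Collecting terms, adding in $\partial_{x_0}f = \partial_{x_0}\alpha + \underline{\omega}\,\partial_{x_0}\beta$, and invoking $\underline{\omega}^2 = -1$, I expect the output to split as
\begin{equation*}
D_h f \;=\; \left[\partial_{x_0}\alpha - \partial_r\beta - \frac{2\gamma_\kappa + m - 1}{r}\beta\right] \;+\; \underline{\omega}\left[\partial_{x_0}\beta + \partial_r\alpha\right],
\end{equation*}
that is, into a ``scalar'' part and a $\underline{\omega}$-part whose coefficients depend only on $(x_0,r)$. Since $\underline{\omega}$ ranges over all of $\mathbb{S}^{m-1}$ as $\underline{x}$ ranges over the sphere of radius $r$ in $\Omega_D\setminus\mathbb{R}$, the uniqueness of the slice decomposition (equivalently, comparing the values at $\underline{\omega}$ and $-\underline{\omega}$) forces both bracketed expressions to vanish, which is exactly the Vekua system in the statement. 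Conversely, the same identity shows that if both brackets vanish then $D_h f = 0$.

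The whole argument is essentially a direct substitution once Lemma \ref{fc1} and the eigenvalue formulas \eqref{pk12} are in hand; I would not expect any serious obstacle. The only two places calling for minor care are the right $\mathbb{R}_{0,m}$-linearity used to commute $\widetilde{\Gamma}_{\underline{\omega}}$ past the Clifford-valued coefficient $\beta(x_0,r)$, and the final separation argument that splits the two parts of the Vekua system from a single vector-valued equation, both of which are standard in the slice-function framework.
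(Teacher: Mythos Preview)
Your argument is correct: the computation via $\underline{D}_h = \underline{\omega}\bigl(\partial_r + \tfrac{1}{r}\widetilde{\Gamma}_{\underline{\omega}}\bigr)$ together with the eigenvalue identities \eqref{pk12} gives exactly the displayed splitting, and the separation of the two brackets by evaluating at $\underline{\omega}$ and $-\underline{\omega}$ is valid because both brackets depend only on $(x_0,r)$.

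Note, however, that the paper does not actually prove this lemma: it is quoted from \cite{fck} without proof, so there is no in-paper argument to compare yours against. Your route---direct substitution using Lemma~\ref{fc1} and \eqref{pk12}---is the natural one and is essentially what the cited reference does. The two points you flag (right $\mathbb{R}_{0,m}$-linearity of $\widetilde{\Gamma}_{\underline{\omega}}$, and the uniqueness of the slice decomposition) are indeed routine: the former holds because $\Gamma_{\underline{\omega}}$ and $\Psi$ act by left Clifford multiplication composed with scalar differential/difference operators, and the latter is just the observation that $\underline{\omega}$ is invertible.
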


\begin{theorem} Suppose $f(x_{0}+\underline{x})$ is a slice regular function of the form \eqref{sr21}, then we have
\begin{equation*}
   \mathcal{V}_{n}f \in \ker D_{\kappa}, \qquad n\in \mathbb{N}\cup \{0\},
\end{equation*}
with $\gamma_{\kappa}=n+\frac{1-m}{2}$. In other words, for a fixed root system $\mathscr{R}$ with positive  multiplicity function $\kappa$ such that $\gamma_{\kappa}+\left(\frac{m-1}{2}\right)\in \mathbb{N}\cup \{0\}$, we have
\begin{equation} \label{qq1}
    		\begin{split}
    			\mathcal{V}_{n}f(x_{0}+\underline{x})\in \left\{
    			\begin{array}{ll}
    				  \ker \overline{\partial}_{x}, &  n=(m-1)/2\in \mathbb{N},\\
    				\ker D_{\kappa}, &  n=\gamma_{\kappa}+(m-1)/2\in \mathbb{N}.\\
    			\end{array}
    			\right.
    		\end{split}
    	\end{equation}
\end{theorem}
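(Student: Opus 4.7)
The plan is to invoke Lemma \ref{fg1} applied to $\mathcal{V}_n f = A(x_0, r) + \underline{\omega}\, B(x_0, r)$: this expression is already of slice form (and the operators $(r^{-1}\partial_r)^n$ and $(\partial_r r^{-1})^n$ preserve the parities in $r$ inherited from the stem function of $f$), so by Lemma \ref{fg1} it belongs to $\ker D_h$ with $\gamma_\kappa = n + (1-m)/2$ if and only if the pair $(A, B)$ satisfies the level-$2n$ Vekua system
\begin{equation*}
    \partial_{x_0} A - \partial_r B = \frac{2n}{r}\, B, \qquad \partial_{x_0} B + \partial_r A = 0,
\end{equation*}
since this choice of $\gamma_\kappa$ makes $2\gamma_\kappa + m - 1 = 2n$. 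I would prove this Vekua system by induction on $n$.

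The base case $n = 0$ is the slice regularity assumption itself: by Theorem \ref{sm1}(a) applied with $\gamma_\kappa = (1-m)/2$ we have $D_h f = 0$, so Lemma \ref{fg1} supplies the Cauchy--Riemann equations $\partial_{x_0} \alpha = \partial_r \beta$ and $\partial_{x_0} \beta = -\partial_r \alpha$, which is exactly the level-$0$ Vekua system for $(\alpha, \beta)$.

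For the inductive step, write $\alpha_n := (r^{-1}\partial_r)^n \alpha$, $\beta_n := (\partial_r r^{-1})^n \beta$, and suppose the level-$2n$ system holds. The second equation at level $n+1$ is almost tautological: both $\partial_{x_0}\partial_r(\beta_n / r)$ and $\partial_r(r^{-1}\partial_r \alpha_n)$ can be rewritten as $\partial_r$ applied to $(\partial_{x_0}\beta_n)/r$ and $(\partial_r \alpha_n)/r$ respectively, so their sum telescopes to $\partial_r[(\partial_{x_0}\beta_n + \partial_r \alpha_n)/r] = 0$ by the inductive hypothesis. The first equation is the computational heart: substituting $\partial_{x_0} \alpha_n = \partial_r \beta_n + (2n/r)\beta_n$ into $\partial_{x_0} \alpha_{n+1} = r^{-1}\partial_r \partial_{x_0}\alpha_n$ and expanding $\partial_r \beta_{n+1} = \partial_r^2(\beta_n/r)$ by Leibniz, the $r^{-1}\partial_r^2 \beta_n$ contributions cancel and the remaining terms collect into exactly $\frac{2(n+1)}{r}\, \partial_r(\beta_n/r) = \frac{2(n+1)}{r}\beta_{n+1}$.

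The main obstacle is the careful bookkeeping in this last expansion: powers of $r^{-1}$ appear repeatedly, and a single misplaced factor derails the cancellation. Modulo this routine but delicate calculation, Lemma \ref{fg1} immediately closes the argument, giving $\mathcal{V}_n f \in \ker D_h$ with $\gamma_\kappa = n + (1-m)/2$. The two special cases in \eqref{qq1} then follow by choosing the multiplicity function accordingly: $\kappa \equiv 0$ forces $\gamma_\kappa = 0$, which requires $n = (m-1)/2$ and reduces $D_h$ to $\overline{\partial}_x$, while a positive $\kappa$ allows any integer $n = \gamma_\kappa + (m-1)/2 \geq 0$.
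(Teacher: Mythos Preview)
Your proposal is correct and follows essentially the same route as the paper: reduce via Lemma \ref{fg1} to the Vekua system for $(A,B)$ with parameter $2n$, then verify the two equations using only the Cauchy--Riemann relations for $(\alpha,\beta)$. The sole cosmetic difference is that the paper carries out the verification directly for general $n$ (tacitly using the operator identity $(r^{-1}\partial_r)^n\partial_r-\partial_r(\partial_r r^{-1})^n=\tfrac{2n}{r}(\partial_r r^{-1})^n$), whereas you unfold that same identity as an induction on $n$; the content is identical.
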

\begin{proof} By Lemma \ref{fg1}, it is sufficient to show that $A$ and $B$ (see Definition \ref{defi41}) satisfy the Vekua-type system. Indeed, we have
    \begin{equation*}
        \begin{split}
            \partial_{x_{0}}A-\partial_{r}B&=\left(\frac{1}{r}\partial_{r}\right)^{n}\{\partial_{x_{0}}\alpha(x_{0}, r)\}-\partial_{r}\left(\partial_{r}\frac{1}{r}\right)^{n}\beta(x_{0}, r)\\
            &=\left(\frac{1}{r}\partial_{r}\right)^{n}\{\partial_{r}\beta(x_{0}, r)\}-\partial_{r}\left(\partial_{r}\frac{1}{r}\right)^{n}\beta(x_{0}, r)\\
            &=\frac{2n}{r}\left(\partial_{r}\frac{1}{r}\right)^{n}\beta(x_{0}, r)\\
            &=\frac{2n}{r}B,
        \end{split}
    \end{equation*}
    \textcolor{black}{where the second equality follows from the fact that $\alpha$ and $\beta$ satisfy the classical Cauchy-Riemann equations,  and the third equality is by \cite[Lemma 4.1 (iv)]{fck}.}

    Similarly, we have
  \begin{equation*}
        \begin{split}
            \partial_{x_{0}}B+\partial_{r}A&=\left(\partial_{r}\frac{1}{r}\right)^{n} \{\partial_{x_{0}}\beta(x_{0},r)\}+
            \partial_{r}\left(\frac{1}{r}\partial_{r}\right)^{n}\{\alpha(x_{0}, r)\}\\
            &=\left(\partial_{r}\frac{1}{r}\right)^{n} \{\partial_{x_{0}}\beta(x_{0},r)\}+
            \left(\partial_{r}\frac{1}{r}\right)^{n}\{\partial_{r}\alpha(x_{0}, r)\}\\
            &=\left(\partial_{r}\frac{1}{r}\right)^{n}\{\partial_{x_{0}}\beta(x_{0}, r)+\partial_{r}\alpha(x_{0}, r) \}\\
            &=0,
        \end{split}
    \end{equation*}
   \textcolor{black}{where we have used the fact $ \partial_{r}\left(\frac{1}{r}\partial_{r}\right)^{n} =\left(\partial_{r}\frac{1}{r}\right)^{n}\partial_{r}$ in the second equality.}
  This completes the proof.
\end{proof}
\begin{remark} The first formula in \eqref{qq1} is indeed the classical Fueter's theorem, while the second one is  the  Fueter's theorem for Dunkl-monogenics obtained in \cite[Theorem 1.1]{fck}.
\end{remark}
As a summary,  the functions in  Fueter's scheme are in the kernel of the Dunkl-Cauchy-Riemann operator $D_{\kappa}$,  with the index $\gamma_{\kappa}$ given as follows,
\begin{equation*}
\begin{matrix} &f\longrightarrow & \Delta_{\mathbb{R}^{m+1}} f\longrightarrow & \cdots &\longrightarrow \Delta_{\mathbb{R}^{m+1}}^{\frac{m-1}{2}}f & \longrightarrow \mathcal{V}_{\frac{m+1}{2}}f \longrightarrow   & \mathcal{V}_{\frac{m+3}{2}}f \longrightarrow & \cdots\\
\\   \gamma_{\kappa}: &\frac{1-m}{2}\quad\quad & \frac{3-m}{2}\qquad  & \cdots & \qquad 0 & 1 &2& \cdots\\
\end{matrix}
\end{equation*}
This means that for the slice regular functions, the operator $\mathcal{V}_{n}$ (i.e. Laplacian $\Delta_{\mathbb{R}^{m+1}}$ or the Dunkl-Laplacian $\Delta_{\kappa}$) acts as a shift operator on the index $\gamma_\kappa$. Some closely related results are given recently in \cite{giulio}.

\section{New approach to generate monogenic functions by the inverse intertwining operator} \label{fna}

In this section, we always assume that the multiplicity function  $\kappa \not\in K^{{\rm sing}}$ in the Dunkl operator and $\gamma_{\kappa}=(1-m)/2$.
\textcolor{black}{Recall that the inverse intertwining operator (see e.g. \cite[Definition  6.5.1]{dx1}) is defined as follows:
\begin{equation*}
   \mathcal{T}f(x_{0}+\underline{x}):={\rm Exp} \left(\sum_{j=1}^{m} x_{j}T_{j}^{(\underline{y})}\right)f(x_{0}+\underline{y})\bigg|_{\underline{y}=0},
\end{equation*}
 where in the above formula, the operator $T_{j}^{(\underline{y})}$ acts  on the variable $\underline{y}$ of $f(x_{0}+\underline{y})$ first;  then we set set $\underline{y}=0$,   ultimately obtaining a function in $x_{0}+\underline{x}$.}







\begin{theorem} \label{sd} Suppose $f$ is a slice regular function of the form \eqref{sr21}, then the function $\mathcal{T}f(x_{0}+\underline{x})$
is monogenic, i.e. it is in the kernel of the generalized Cauchy-Riemann operator $\overline{\partial}_{x}=\partial_{x_{0}}+\sum_{j=1}^{m}e_{j}\partial_{x_{j}}$.
\end{theorem}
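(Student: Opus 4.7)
The plan is to prove the theorem by a direct computation, relying on two ingredients: (a) Theorem \ref{sm1}, which guarantees $D_h f=0$ for any slice regular $f$ whenever $\gamma_\kappa=(1-m)/2$; and (b) the commutation properties of the operators involved --- namely that the Dunkl operators $T_j^{(\underline y)}$ mutually commute for a fixed multiplicity function, commute with multiplication by the scalar variables $x_k$ (since these live in a disjoint variable group), and commute with the left Clifford units $e_j$ (since the $T_j$ have scalar coefficients and act componentwise on $\mathbb{R}_{0,m}$-valued functions).

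First I would expand the exponential as a formal Dunkl-Taylor-type series
\begin{equation*}
\mathcal T f(x_0+\underline x)=\sum_{n=0}^\infty\frac{1}{n!}\Big(\sum_{k=1}^m x_kT_k^{(\underline y)}\Big)^{\!n} f(x_0+\underline y)\Big|_{\underline y=0}.
\end{equation*}
Because the $x_k$ and the $T_k^{(\underline y)}$ commute pairwise, the $n$-th power in this series expands by the multinomial formula, and direct bookkeeping yields
\begin{equation*}
\partial_{x_j}\mathcal Tf(x_0+\underline x)=\Big[T_j^{(\underline y)}\,\mathrm{Exp}\Big(\sum_k x_kT_k^{(\underline y)}\Big)f(x_0+\underline y)\Big]_{\underline y=0},
\end{equation*}
while $\partial_{x_0}$ passes straight through the exponential (which does not depend on $x_0$) to produce
\begin{equation*}
\partial_{x_0}\mathcal Tf(x_0+\underline x)=\Big[\mathrm{Exp}\Big(\sum_k x_kT_k^{(\underline y)}\Big)\partial_{x_0}f(x_0+\underline y)\Big]_{\underline y=0}.
\end{equation*}

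Combining these two identities and pulling each Clifford unit $e_j$ past the exponential (they commute with it, being scalar in nature with respect to the $\underline y$-variable), I would obtain
\begin{equation*}
\overline\partial_x\mathcal Tf=\Big[\mathrm{Exp}\Big(\sum_k x_kT_k^{(\underline y)}\Big)\Big(\partial_{x_0}+\sum_{j=1}^m e_jT_j^{(\underline y)}\Big)f(x_0+\underline y)\Big]_{\underline y=0}.
\end{equation*}
The inner parenthesized operator is exactly $D_h^{(\underline y)}$ acting on $f$ viewed as a function of $x_0+\underline y$, and this vanishes by Theorem \ref{sm1}. Hence $\overline\partial_x\mathcal Tf\equiv 0$, so $\mathcal Tf$ is monogenic.

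The main obstacle I anticipate is not algebraic but analytic: one must justify the convergence of the exponential series when applied to $f(x_0+\underline y)$ in a neighborhood of $\underline y=0$, and justify the interchange of differentiation with the infinite sum. Since slice regular functions are real-analytic, this should hold on a sufficiently small axially symmetric neighborhood of the origin where the Dunkl-Taylor series of $f$ at $\underline y=0$ converges; such convergence is standard in the Dunkl setting via estimates on the inverse intertwining kernel, but the domain of definition of $\mathcal Tf$ will likely need to be specified explicitly (or an appropriate class of $f$ identified, e.g.\ slice regular functions on an open ball centered at the origin, where the power series $\sum_n x^n a_n$ representation makes every computation termwise transparent).
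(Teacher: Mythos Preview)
Your proposal is correct and follows essentially the same approach as the paper: both arguments reduce to the intertwining identity $\partial_{x_j}\circ\mathcal T=\mathcal T\circ T_j$ followed by an application of Theorem~\ref{sm1}. The only difference is cosmetic---the paper quotes this intertwining relation as a known property of the inverse Dunkl intertwining operator (citing \cite[Proposition~6.5.2]{dx1}) and works ``at least formally,'' whereas you re-derive it explicitly from the exponential series using the mutual commutativity of the $T_k^{(\underline y)}$; your closing remarks about convergence and domain of definition are exactly the points the paper leaves implicit.
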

\begin{proof} It is sufficient to verify
\begin{equation*}
\begin{split}
     \left(\partial_{x_{0}}+\sum_{j=1}^{m}e_{j}\partial_{x_{j}}\right) \mathcal{T}f(x_{0}+\underline{x})&= \mathcal{T}\left( \left(\partial_{x_{0}}+\sum_{j=1}^{m}e_{j}T_{j}\right) f(x_{0}+\underline{x})\right)\\
     &=\mathcal{T} 0=0.
\end{split}
\end{equation*}
The first step is by the property of the inverse intertwining operator
\begin{equation*}
    \partial_{x_j}\circ \mathcal{T}=\mathcal{T} \circ T_{j}
\end{equation*}
at least formally, see \cite[Proposition 6.5.2]{dx1}.
The second step is by Theorem  \ref{sm1}.
\end{proof}
Some remarks are now in the order.
\begin{remark} As opposed to the classical Fueter's theorem (see e.g. \cite{ qian, sce}), there is no significant difference between even and odd dimensions in this approach. However, singular parameters $\kappa$ should be excluded.
\end{remark}
\begin{remark} Our formula looks similar with the other well-known approach to construct monogenic functions, namely the
Cauchy-Kovalevskaya extension \textcolor{black}{(see e.g. \cite[Theorem 2]{deso})}, given by
\begin{equation*}
    f(x_{0}+\underline{x})={\rm Exp}\left(-x_{0} \sum_{j=1}^{m}e_{j}\partial_{x_j}\right)f(\underline{x}).
\end{equation*}
\end{remark}
\begin{remark} The formal inverse intertwining operator $\mathcal{T}$ exists for any multiplicity function, see \cite{dx1}.
On the other hand, it is known that the Dunkl's intertwining operator for the rank 1 root system can be expressed explicitly using the Weyl fractional integral, see e.g.\,\cite{dl, dx1}. It will be interesting to write $\mathcal{T}$ in terms of classical fractional integrals and ordinary derivatives for the rank 1 root system.
Then one can give more precise convergence discussions.
\end{remark}
\begin{remark} It is interesting to ask the domain of $ \mathcal{T}f(x_{0}+\underline{x})$. From the proof of Theorem \ref{sd}, the result holds when the inverse intertwining operator is well defined and the  inverse intertwining relations hold.

\end{remark}
At the end of this section, we give some examples mainly related with the Dunkl operators on the line.
\begin{ex} For the polynomial $x_{0}+\underline{x}=x_{0}+e_{1}x_{1}+e_{2}x_{2}+e_{3}x_{3}$, the classical Fueter's theorem in \cite{sce} produces a trivial monogenic function.
 If we choose $D_{\kappa}=\partial_{x_{0}}+e_{1}\partial_{x_{1}}+e_{2}\partial_{x_{1}}+e_{3}T_{3}$, where
\begin{equation}\label{pq1}
    T_{3}f(x)=\partial_{x_{3}}f(x)-\frac{f(x)-f(\sigma_{e_{3}}x)}{x_{3}},
\end{equation} then we have
 \begin{equation*}
     \mathcal{T}(x_{0}+\underline{x})=x_{0}+e_{1}x_{1}+e_{2}x_{2}-e_{3}x_{3},
 \end{equation*}
 which is a homogeneous monogenic polynomial of degree 1.
Similarly, if we put the Dunkl operator in the variable $x_{2}$, it yields
\begin{equation*}
      \mathcal{T}(x_{0}+\underline{x})=x_{0}+e_{1}x_{1}-e_{2}x_{2}+e_{3}x_{3}.
 \end{equation*}
\end{ex}
  \begin{ex} For the polynomial \begin{equation*}
     x^{2}=(x_{0}+\underline{x})^{2}=(x_{0}+e_{1}x_{1}+e_{2}x_{2}+e_{3}x_{3})^{2},
  \end{equation*} Fueter's theorem in \cite{sce} again produces a constant, which is clearly monogenic. While choosing as before $D_{\kappa}=\partial_{x_{0}}+e_{1}\partial_{x_{1}}+e_{2}\partial_{x_{1}}+e_{3}T_{3}$, where $T_{3}$ is the operator defined in \eqref{pq1},  we obtain
  \begin{equation*}
       \mathcal{T}(x_{0}+\underline{x})^2=x_0^2-x_1^2-x_2^2+x_3^2+2x_0(e_1x_1+e_2x_2-e_3x_3),
  \end{equation*}
      which is a homogeneous monogenic of degree 2.

      Furthermore, if we  consider the root system $\mathscr{R}=\{\pm e_j\}_{j=1}^3$, with $D_{h}=\partial_{x_{0}}+e_{1}T_{1}+e_{2}T_2+e_{3}T_{3}$, in which
 \begin{equation*}
     T_{j}f(x)=\partial_{x_{j}}f(x)+\kappa(e_j)\frac{f(x)-f(\sigma_{e_j}x)}{x_{j}},  \qquad i=1,2,3,
 \end{equation*} our approach  produces a family of monogenic functions under the condition $\gamma_\kappa=\sum_{i=1}^3\kappa (e_i)=-1$:
 \begin{equation*}
      \mathcal{T}(x_{0}+\underline{x})^2=x_0^2-\sum_{i=1}^3x_i^2(1+2\kappa(e_i))+2x_0\left(\sum_{i=1}^3e_ix_i(1+2\kappa (e_i))\right).
 \end{equation*}
 Indeed, we have
 \begin{equation*}
\overline{\partial}_{x} \mathcal{T}(x_{0}+\underline{x})^2=-4x_0\left(1+\sum_{i=1}^3\kappa(e_i)\right)=0.
 \end{equation*}

 This can also be extended to the general Clifford setting.  Consider the polynomial on $\mathbb{R}^{m+1}$, \begin{equation*}
     x^{2}=(x_{0}+\underline{x})^2=x_0^2-\sum_{i=1}^mx_i^2+2x_0\sum_{i=1}^m e_ix_i
 \end{equation*} and the root system $\mathscr{R}=\{\pm e_i\}_{i=1}^m$. Then we have
 \begin{equation*}
     \mathcal{T}x^{2}=x_0^2-\sum_{i=1}^m x_i^2(1+2\kappa(e_i))+2x_0\left(\sum_{i=1}^me_ix_i(1+2\kappa(e_i))\right).
 \end{equation*}
 These are monogenic functions under the condition $\gamma_\kappa=\sum_{i=1}^{m}\kappa(e_i)=(1-m)/2$. Indeed, straightforward computation shows that
 \begin{equation*}
     \overline{\partial}_{x}\mathcal{T}x^{2}=4x_0\left(\frac{1-m}{2}-\sum_{i=1}^m\kappa(e_i)\right)=0.
 \end{equation*}
  \end{ex}
  \section{Concluding remarks}
This paper establishes the groundwork for a research project to investigate the interactions among Dunkl, Clifford and slice analysis. As shown, the inverse intertwining operator in Dunkl theory, serves as a potent tool that naturally generates examples of monogenic functions, in a different way than Fueter's celebrated theorem. These elucidated connections are poised to yield novel insights into slice regular functions, leveraging methodologies  from Dunkl theory and hyperbolic function theory. Noteworthy outcomes will include the derivation of novel mean value properties and the Cauchy integral formula, among others.  Moreover, the reflections can be shown to be involutional anti-automorphisms which is the basis of the construction of slice functions in the context of alternating $^\ast$-algebras in \cite{ghiper}. Currently, only a small part of these tools has been taken into consideration and in the future we plan to analyze them in more depth.

   \section*{Acknowledgements} This paper has its origin in a stay of the first  author at the Clifford Research Group of Ghent University, Belgium in 2023. He would like to thank the hospitality there. The first author was  partially supported by GNSAGA of INdAM through the project ``Teoria delle funzioni ipercomplesse e applicazioni". The third author was supported  by NSFC with Grant No.12101451 and  China Scholarship Council. \textcolor{black}{We appreciate the referees'  valued comments,  which improved the clarity of the paper.}



\bibliographystyle{amsplain}

\end{document}